\newtheorem{thm}{Theorem} 
\newtheorem{lemma}{Lemma} 
\newtheorem{coro}{Corollary}
\newtheorem{conj}{Conjecture}
\let\paragraph\subsection
\title{Remarks about Connection and Dirac matrices}
\author{Oliver Knill}
\date{January 25, 2026, Updated July 29, 2026}
\address{Department of Mathematics \\ Harvard University \\ Cambridge, MA, 02138 }
\subjclass{}
\begin{document}
\begin{abstract}
The connection Laplacian $L$ and the Dirac matrix $D$ are both 
$n \times n$ matrices defined from a given finite abstract simplicial complex $G$ with $n$ sets. 
In both cases, there is interlacing of the eigenvalues for subcomplexes.
This gives general upper bounds of the eigenvalues $\lambda_j \leq d_j$ both for
$L$ and $D$ in terms of inclusion or intersection degrees. 
We conjecture that $L$ always dominates both $D$ and $g=L^{-1}$ in a weak Loewner sense.
In a second part we look at dynamical systems $(G,T)$, where $T$ is a simplicial map on $G$.
Both $L$ and $D$ generalize to dynamical versions $L_T$ and $D_T$.
$L_T$ is still unimodular with an explicit Green function inverse $g_T$ and 
$D_T=d_T + d_T^*$ still comes from an exterior derivative $d_T$. 
We also review the Lefschetz fixed point theorem $\sum_{x \in {\rm F}} i_T(x) = \chi_T(G)$ 
for a simplicial map $T$ on a finite abstract simplicial complex $G$. It implies
the Brouwer fixed point theorem telling that simplicial map on a contractible finite abstract
simplicial complex $G$ always has at least one simplex fixed. This is all combinatorial as 
no geometric realizations are ever involved. 
\end{abstract}

\maketitle

\section{Introduction}

\paragraph{}
If $G$ is a {\bf finite abstract simplicial complex} with $n$ sets, both the 
{\bf Dirac matrix} $D$ as well as the {\bf connection matrix} $L$ are 
$n \times n$ matrices.  The Dirac matrix $D=d+d^*$ uses the {\bf exterior derivative} $d$
is a matrix $d(x,y) = {\rm sign}(x,y)$ if $|x|$ and $|y|$ differ by $1$ and $y \subset x$
and $d(x,y)=0$ else. The {\bf connection matrix} is defined as $L(x,y)=1$ if 
$x\cap y \neq \emptyset$ and $L(x,y)=0$ else. Its inverse $g=L^{-1}$ defines {\bf Green functions} 
$g(x,y)$.  Both the {\bf Hodge Laplacian} $H=D^2=d d^* + d^* d$ and the connection Laplacian 
$L^2$ have non-negative spectrum. For $1$-dimensional complexes, 
$L-L^{-1}=|D|^2$. \cite{HearingEulerCharacteristic}. 

\begin{figure}[!htpb]
\scalebox{0.85}{\includegraphics{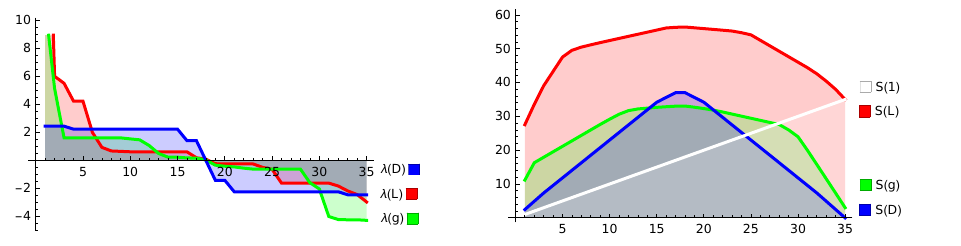}}
\label{Connection, Dirac an Green}
\caption{
The figure show the spectra $\lambda$ (left) and cumulative spectra $S$ (right) of the connection
matrix $L$, the Green matrix $g=L^{-1}$ and the Dirac matrix $D=d+d^*$ for a random complex $G$.
$L$ appears to weakly Loewner dominate both $D$ and $g$.
}
\end{figure}

\paragraph{}
The Dirac matrix uses ``inclusion" $x \subset y$ while the connection matrix uses ``intersection"
$x \cap y \neq \emptyset$ for its definition. Both frame-works have spectral relations to the 
{\bf Euler characteristic} $\chi(G) = \sum_{x \in G} \omega(x)$ with $\omega(x)=(-1)^{{\rm dim}(x)}$. 
The kernels ${\rm ker}(H_k)$ of the Hodge Laplacian blocks $H_k$ are the {\bf cohomology groups} of $G$
and their dimensions ${\rm dim}({\rm ker}(H_k))$ are the {\bf Betti numbers} $b_k$.
By Euler-Poincar\'e, they super-sum to the Euler characteristic $\sum_{k=0}^q (-1)^k b_k=\chi(G) = 
\sum_{x \in G} \omega(x)$. This identity can already be seen as an example of the Lefschetz fixed point
formula, where the simplicial map is the identity and every $x \in G$ is a fixed point and the 
Euler characteristic is the Lefschetz number. Remarkably, the connection matrix $L$ and so its square $L^2$ 
are both unimodular. The determinant is explicitly known as 
${\rm det}(L)=\prod_{x \in G} \omega(x)$. 
The Euler characteristic is the total potential energy $\sum_{x,y \in G} g(x,y) = \chi(G)$.
The number of positive eigenvalues 
minus the number of negative eigenvalues of $L$ is equal to the Euler characteristic. For $D$ the number of positive
and negative eigenvalues is the same. If the spectra of $D$ and $L$ and the inverse $g=L^{-1}$ are plotted together then 
the graphs $j \to \lambda_j(D)$ and $j \to \lambda_j(L)$ and $j \to \lambda_j(g)$ cross close to each other. 
The cumulative spectra appear to be ordered: the cumulative sum $S_j(L)$ appears to dominate both $S_j(g)$
and $S_j(D)$.

\paragraph{}
The Hodge Laplacian $H=D^2$ is always a {\bf reducible} matrix because it is block diagonal.
It is also the sum of $D_k=d^* d$ and $D_k' =d^* d$ which are called a
{\bf combinatorical Laplacians}. The $D_k$ plays a role for {\bf analytic torsion}
$A(G) = \prod_k {\rm Det}(H_k)^{k (-1)^{k+1}}$, where
${\rm Det}$ is the {\bf pseudo determinant}, the product of the non-zero eigenvalues.
Torsion $A(G)$ can be rewritten as a {\bf super pseudo determinant}
${\rm SDet}(D) = \prod_k {\rm Det}(D_k)^{(-1)^k}$ of these Dirac blocks. Torsion satisfies
$A(G)=|V|$ for contractible complexes, $A(G)=|V|/|V'|$ for even dimensional spheres and 
$|V| |V'|$ for odd dimensional spheres, where $|V|$ is the number of zero-dimensional simplices
and $|V'|$ is the number of facets, the highest dimensional simplices. Unlike in the continuum,
where the subject is quite technical, analytic torsion is a topic for linear algebra 
in the finite \cite{KnillTorsion}.

\paragraph{}
In contrast to the Hodge Laplacian $H=D^2$, the connection matrix $L$ and so its Laplacian 
$L^2$ are always irreducible non-negative $0-1$ matrices if $G$ is connected. The reason is that $L$ has
only non-negative entries and that the connection graph of $G$ has a finite diameter $r$ implying that $L^r$
has only positive matrix entries implying irreducibility. This implies that $L$ (and so $L^2$) 
always has a unique {\bf Perron-Frobenius eigenvalue}. As $L$ is invertible with determinant $1$ or $-1$, 
also its inverse $g=L^{-1}$ is an irreducible integer matrix. An interesting question which we can not 
answer yet is whether also $g=L^{-1}$ always has a unique maximal eigenvalue. The powers of $g$ do not necessarily
become positive now.  But we see a unique maximal eigenvalue in the case in all examples considered so far. 
While the trace of $L$ is $n$, the trace of $g$ is the sum of the Euler characteristics 
$\sum_x \chi(U(x))$ of the smallest open sets $U(x)=\{y \in G, x \subset y\}$. 
This follows from the general Green-Star formula $g(x,y) = \omega(x) \omega(y) \chi(U(x) \cap U(y))$. 

\paragraph{}
When looking at the {\bf fusion inequality} 
$b(U)+b(K) \geq b(G)$ \cite{fusion1,fusion2} for an open closed pair
$U \cup K =G$, where $U$ is open and $K$ is closed, we made use of the fact that the 
spectra move monotonically for the Hodge spectra. This had used the 
{\bf Courant minimax principle}. But more is true: the spectra also interlace.
As we will see, in both cases, we can get from $L_G$ to $L_K$ or from $D_G$ to $G_K$ 
by deleting some rows or columns belonging to maximal simplices, then
use the Cauchy interlace theorem \cite{HornJohnson2012}:
if $\mu_k$ are the eigenvalues of a principal $(n-1 \times n-1)$ sub-matrix of $K$,
then $\lambda_k \leq \mu_k \leq \lambda_{k+1}$.

\paragraph{}
We suspect that $L \geq D$ in a {\bf weak Loewner sense}, meaning that the 
eigenvalues $\lambda_j$ of $L$ and the eigenvalues $\mu_j$ of $D$ satisfy 
$\sum_{j=1}^k \lambda_j \geq \sum_{j=1}^k \mu_j$ for all $k$. We also see
that $L$ weakly Loewner dominates its inverse $g$. These are open questions. 

\paragraph{}
In the 1-dimensional case, there is the relation $|H|=L-L^{-1}$ between the 
{\bf sign-less Hodge Laplacian} $|H|=|D|^2$ and the connection Laplacian.
We called it the {\bf hydrogen identity} \cite{Hydrogen}.
For Kirchhoff spectra of graph, this can be useful for estimating
the spectral radius of $|H|$ and so give upper bounds for the spectral 
radius of $H$ in terms of the spectrum of $L$.

\paragraph{}
For {\bf higher characteristics} $w_k(G)$ \cite{CharacteristicTopologicalInvariants},
which generalize Euler characteristic $w_1(G)=\chi(G)$, there are parallel stories. 
In each case, there are {\bf connection Green functions}
$g(x,y) = w(x) w(y) w_k(U(x) \cap U(y)$. It can be generalized to energized cases. 
The connection story is a 2-point Green function story and deals with matrices so that it 
is linear algebra. There are {\bf k-point Green functions}.
For higher characteristic, the matrix $g(x,y)=w(x) w(y) w_k(U(x) \cap U(y))$ is not 
unimodular any more in general. 

\section{The Connection Laplacian}

\paragraph{}
If $G$ is a finite abstract simplicial complex containing $n$ sets, 
the $n \times n$ connection matrix $L$ can be defined as $L(x,y)=\chi(C(x) \cap C(y))$, 
where $C(x) = \{ y \in G, y \subset x\}$ and where
$\chi(A) = \sum_{y \in A} \omega(x)$ is the {\bf Euler characteristic}
with $\omega(x)=(-1)^{{\rm dim}(x)}$. This more general definition is useful for generalized
energized versions, where $\omega(x)$ is an arbitrary non-zero function (which even can become
division algebra-valued \cite{EnergizedSimplicialComplexes2}).
In the special case of the Euler characteristic, 
we have $L(x,y)=1$ if $x,y$ intersect and $0$ else, because $C(x) \cap C(y)$ 
is the simplicial complex that is defined by the simplex $x \cap y$ and $\chi(C(x))=1$. 

\paragraph{}
The matrix $L$ is unimodular and has as inverse the integer matrix
$g(x,y) = \omega(x) \omega(y) \chi(U(x) \cap U(y))$, where 
$U(x)=\{ y \in G, x \subset y\}$ is the {\bf star} of $x$. The 
determinant of $L$ is known to be the {\bf Fermi characteristic} $\prod_{x \in G} \omega(x)$, which is 
a multiplicative version of the Euler characteristic $\chi(G) = \sum_{x \in G} \omega(x)$. 
We know that the number of positive eigenvalues of $L$ minus the number of negative 
eigenvalues of $L$ is the Euler characteristic of $G$. See \cite{KnillEnergy2020}. 

\paragraph{}
We have seen already that if $K \subset G$ is a sub-complex of $G$, then $L(K)$ is a 
principal sub-matrix of $L(G)$ so that interlacing happens. 
This is related to the result (see \cite{Godsil} ) that the 
non-zero eigenvalues of a graph $G$ interlace the eigenvalues 
of the graph $G$ in which an additional edge has been added. 
As $L(K)$ still is a connection matrix of a complex, it still is unimodular.

\paragraph{}
As for open subsets $U \subset G$, and if we take a vertex away, we still have 
an open set but we do not have a principal submatrix any more in general. The 
removal of a vertex also removed connections between higher dimensional simplices. 
Take the example of $U=\{ \{1\}, \{1,2\} \}$ in $K_2$. Then, $L(U)$ is a $2 \times 2$
matrix, where each entry is $1$. This obviously is no more unimodular. 

\paragraph{}
Define the {\bf connection degree} of $x \in G$ as 
$d(x) = | \{ y \in G, \chi(C(x) \cap C(y))  \neq 0 \} |$, where 
$C(x)= \{ y \in G, y \subset x \}$. 
In the $1$-dimensional case, this is the vertex degree if $x$ is a vertex.
We also have $d((a,b))= d(a)+d(b)$ if $x=(a,b)$ is an edge of the 1-dimensional complex. 
Let $d_1 \geq d_2 \cdots $ denote the ordered connection degrees of $G$ and 
$\lambda_1 \geq \lambda_2 \cdots$ the ordered connection eigenvalues of $L$. 

\begin{thm}
The connection Laplacian spectrum satisfies $\lambda_j \leq d_j$. 
\end{thm}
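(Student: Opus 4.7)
The plan is to derive the bound $\lambda_j \leq d_j$ from the Courant--Fischer min--max principle applied to a carefully chosen coordinate subspace. Order the simplices $x_1, \ldots, x_n$ of $G$ so that $d(x_1) \geq d(x_2) \geq \cdots \geq d(x_n)$, i.e.\ $d_i = d(x_i)$. For the target index $j$ I would take as test subspace
\[
V_j = \mathrm{span}(e_j, e_{j+1}, \ldots, e_n),
\]
the coordinate subspace of dimension $n-j+1$ spanned by the directions associated with the $n-j+1$ simplices of \emph{smallest} connection degree. By Courant--Fischer, $\lambda_j \leq \max_{0 \neq v \in V_j} \langle v, Lv\rangle / \langle v, v\rangle$, so the entire theorem reduces to controlling the Rayleigh quotient of $L$ restricted to $V_j$.

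The second step is the actual Rayleigh quotient estimate. For $v = \sum_{i \geq j} c_i e_i \in V_j$, I expand $\langle v, Lv\rangle = \sum_{i,k \geq j} c_i c_k L(x_i, x_k)$ and apply the elementary inequality $c_i c_k \leq \tfrac{1}{2}(c_i^2 + c_k^2)$ together with the non-negativity $L(x_i, x_k) \in \{0, 1\}$ valid in the Euler-characteristic case. Symmetrizing the double sum collapses it to $\sum_{i \geq j} c_i^2 \sum_{k \geq j} L(x_i, x_k)$. Because $L$ is a symmetric $0$--$1$ matrix, the row sum $\sum_{k} L(x_i, x_k)$ equals the number of nonzero entries in row $x_i$, which is exactly the connection degree $d(x_i)$; truncating the inner sum to $k \geq j$ can only decrease it. The ordering then forces $d(x_i) \leq d(x_j) = d_j$ for $i \geq j$, giving $\langle v, Lv\rangle \leq d_j \|v\|^2$ and hence $\lambda_j \leq d_j$.

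There is no serious obstacle in the standard Euler-characteristic setting: this is the classical Courant--Fischer argument that yields $\lambda_k \leq d_k$ for the adjacency matrix of a graph, and it extends verbatim because $L$ is a symmetric $0$--$1$ matrix whose row sums equal the connection degrees by construction. The only point that requires care is the identification of ``connection degree'' with ``row sum'', which uses the fact that $L(x,y) \in \{0,1\}$. If one wanted to push the result to the energized generalizations mentioned earlier in the paper, where $L(x,y)$ may be signed, the natural adjustment is to replace $L(x,y)$ by $|L(x,y)|$ and $d(x)$ by the weighted analogue $\sum_y |L(x,y)|$; the inequality $c_i c_k L(x_i,x_k) \leq \tfrac{1}{2}(c_i^2 + c_k^2)|L(x_i,x_k)|$ still closes the argument.
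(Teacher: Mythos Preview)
Your argument is correct. It is the classical Courant--Fischer proof that the $j$-th eigenvalue of a symmetric nonnegative matrix is bounded by its $j$-th largest row sum, and it applies here verbatim because $L$ is a symmetric $0$--$1$ matrix whose row sums are by definition the connection degrees.

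The paper takes a different route: it argues by induction on $|G|$, removing a maximal simplex so that the connection matrix of the subcomplex $K$ is a principal $(n-1)\times(n-1)$ submatrix of $L$, and then invokes Cauchy interlacing together with the spectral-radius bound $\lambda_1\le d_1$. That approach leans on the simplicial-complex structure (one must remove a \emph{facet} to stay in the category, and the facet of highest degree so that the degree sequence of $K$ shifts correctly), whereas your argument never uses that $G$ is a simplicial complex at all---only that $L$ has nonnegative entries. On the other hand, the paper's inductive proof delivers the interlacing statement for subcomplexes as a by-product, which is reused later for the Dirac matrix and for open sets; your direct Rayleigh-quotient bound gives the eigenvalue inequality in one stroke but does not yield interlacing. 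Your closing remark about the energized case (replacing $L(x,y)$ by $|L(x,y)|$ and $d(x)$ by $\sum_y|L(x,y)|$) is the natural extension and is consistent with the paper's appeal to ``the spectrum of $L$ is majorized by the spectrum of $|L|$'' for the $\lambda_1\le d_1$ step.
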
 

\begin{proof} 
Use induction with respect to the number of elements in $G$. 
Removing a maximal simplex $x$ has the effect that the connection matrix is
a principal sub-matrix. Let $\mu_j$ be an eigenvalues of 
the smaller reduced complex $K$. Induction tells $\mu_j(K) \leq d_j(K) \leq d_j(G)$. 
The spectral radius estimate $\lambda_1 \leq d_1$ follows from basic principles \cite{HornJohnson2012}:
the spectrum $L$ is majorized by the spectrum of $|L|$ which is bounded above by the 
maximal row or column sum which is $d_1$. 
Now $\lambda_2 \leq \mu_1 \leq d_1(K) \leq d_2(G)$ and
    $\lambda_3 \leq \mu_2 \leq d_2(K) \leq d_3(G)$ etc.  
\end{proof} 

\paragraph{}
This does not give any information for negative eigenvalues 
because $d_j \geq 0$.  An other inequality is the Schur-Horn inequality 
$\sum_{j=1}^k \lambda_j \leq k$ which follows because the diagonal entries of $L$ are $1$. 
This concludes the remark about the majorization of eigenvalues
of the connection matrix in simplicial complexes. We turn now to the Dirac case. 

\begin{figure}[!htpb]
\scalebox{0.85}{\includegraphics{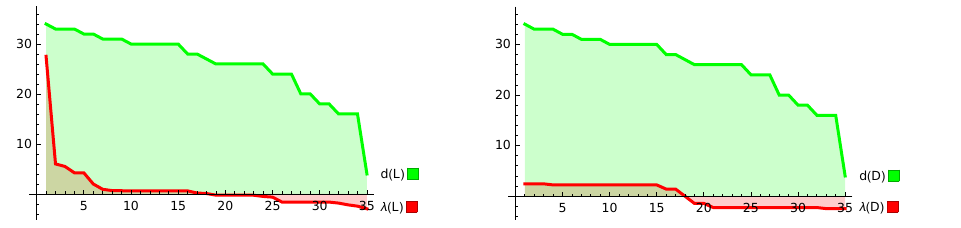}}
\label{Comparison}
\caption{
The estimates $\lambda_j(D) \leq d_j(D)$ and $\lambda_j(L) \leq d_j(L)$
where $d_j(D)$ and $d_j(L)$ are the ordered vertex degrees of $D$ and $L$.
}
\end{figure}

\section{The Dirac matrix}

\paragraph{}
The interlacing story has a parallel situation for the Dirac matrix $E$
Let $D_K$ denote the Dirac matrix of $K$ if $D=D_G$ is the Dirac matrix of $G$. 
In the Dirac case, the interlacing not only works for closed sets, it also does for 
open sets: 

\begin{thm}
If $K \subset G$ is a sub-complex of $G$, then the eigenvalues of $D_K$ interlace the
eigenvalues of $D_G$. The same holds if $U \subset G$ is an open set. 
\end{thm}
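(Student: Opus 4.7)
The plan is to mirror the argument used for $L$ in the preceding paragraph: in both the closed and the open case, the restricted Dirac matrix appears as a principal submatrix of $D_G$, after which one iterates the Cauchy interlace theorem (delete one row/column at a time, each time interlacing by $\lambda_k \le \mu_k \le \lambda_{k+1}$). So the real content is to check the principal-submatrix claim in each case.

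First I would handle the closed case. For any $x,y \in G$, the entry $D_G(x,y)$ is determined by the pair $(x,y)$ alone: it equals $\pm 1$ exactly when one of $x,y$ is a codimension-one face of the other, and vanishes otherwise. Hence if $K \subset G$ is a subcomplex, the submatrix of $D_G$ obtained by keeping only rows and columns indexed by simplices of $K$ is literally $D_K$. Writing $n=|G|$ and $m=|K|$, Cauchy interlacing then gives $\lambda_j(D_G) \le \lambda_j(D_K) \le \lambda_{j+n-m}(D_G)$, i.e.\ the eigenvalues of $D_K$ interlace those of $D_G$.

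The open case is handled the same way, provided one first pins down what $D_U$ means, since $U$ is not itself a simplicial complex. The natural definition is the one forced on us by the general $D(x,y)$ formula: $D_U$ is the restriction of $D_G$ to rows and columns indexed by $U$. With this convention, $D_U$ is tautologically a principal submatrix of $D_G$ (equivalently: $K'=G \setminus U$ is a closed subcomplex, and we delete its simplices one by one, each deletion being a principal-submatrix step). Cauchy interlacing again gives the desired conclusion.

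The only place where one could stumble is the open case, because there is no cochain-complex interpretation of $D_U$ (the coboundary stays inside $U$ by upward-closedness, but the boundary does not). I would address this by simply taking the matrix-level definition as primitive: once $D_U$ is understood as a principal submatrix of $D_G$, interlacing is immediate and no cochain machinery is needed. Both halves of the theorem therefore reduce to Cauchy interlacing applied to a principal submatrix, just as in the connection case.
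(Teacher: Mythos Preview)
Your proposal is correct and follows essentially the same route as the paper: both arguments rest on the observation that $D_G(x,y)$ depends only on the pair $(x,y)$, so that restricting to a closed $K$ or an open $U$ yields a principal submatrix, after which Cauchy interlacing finishes the job. If anything, your formulation is slightly cleaner, since you note directly that \emph{any} subset of simplices gives a principal submatrix of $D_G$, whereas the paper phrases this via successive removal of maximal simplices and an explicit contrast with the connection matrix (where the open case fails).
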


\begin{proof}
The proof is the same and reflects the fact that going to a subcomplex can be 
achieved by removing successively maximal simplices in $G$ that are not in $K$. 
As for open sets, removing a maximal simplex $x$ only changes the matrix entries $D(x,y)$ of smaller
dimensional simplices $y$ contained in $x$. Unlike for $L$ (where intersection matters) 
it does not affect other entries.
\end{proof} 

\paragraph{}
The estimate is very rough and far from optimal. 
Estimating the individual eigenvalues of $H=D^2$ is trickier. It was possible in the 
case of $H_0$, the Kirchhoff matrix \cite{Knill2024}. By McKean-Singer, the non-zero 
eigenvalues of $H$ restricted to even forms is the set set of non-zero
eigenvalues of $H$ restricted to odd forms. Eigenvalues therefore come in pairs. 
This can be encoded in short as
${\rm str}(H^n)=0$ for all $n>0$. We also have ${\rm str}(H^0)={\rm str}(1) = \chi(G)$. The Taylor expansion
of the exponential function immediately gives the McKean-Singer symmetry ${\rm str}(e^{-it H}) = \chi(G)$. 

\paragraph{}
Define the {\bf Dirac graph} of a simplicial complex has the graph in which 
the elements in the complex as vertices and where two vertices are connected 
if $D(x,y)$ is non-zero. Define the {\bf Dirac graph degrees} as the vertex degrees of this graph. 
We can rewrite this as $d_x=\sum_k |D_{xy}|$. Again we assume that the eigenvalues
are ordered in a descending manner and relabel the sequence as $d_1 \geq d_2 \geq \cdots \geq d_n$
if $|G|=n$. Also the eigenvalues $\lambda_j$ of $D$ are ordered as $\lambda_1 \geq \lambda_2 \cdots \geq \lambda_n$. 

\begin{thm}
The ordered eigenvalues $\lambda_j$ of the Dirac matrix of a complex satisfy $\lambda_j \leq d_j$, 
where $d_j$ are the ordered Dirac graph vertex degrees.  The same works for open sets in a complex. 
\end{thm}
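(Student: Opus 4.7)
The plan is to follow the inductive scheme from the proof of Theorem 1 (for the connection Laplacian $L$), now using the Dirac interlacing supplied by Theorem 2. Since the latter applies to both closed subcomplexes and open subsets, the same argument should cover both cases in parallel. I would induct on the cardinality $|G|$.

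For the spectral-radius base case $\lambda_1(D) \leq d_1$, I would note that $D$ is real symmetric, so by the Rayleigh principle and the triangle inequality one has $\lambda_1(D) \leq \lambda_1(|D|)$, where $|D|$ denotes the entry-wise absolute value of $D$. Because $|D|$ is the adjacency matrix of the Dirac graph, a non-negative symmetric $0$--$1$ matrix, its spectral radius is bounded above by its largest row sum $d_1$. This works uniformly, regardless of whether $G$ is closed or open.

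For the inductive step, I would remove a single extremal simplex $x^{\ast}$ to obtain $K = G \setminus \{x^{\ast}\}$: a maximal simplex in the closed-complex case, a minimal element in the open-set case. By Theorem 2, $D_K$ is a principal submatrix of $D_G$, so the Cauchy interlace theorem gives $\lambda_{k+1}(D_G) \leq \mu_k(D_K)$, where the $\mu_k$ are the eigenvalues of $D_K$. The induction hypothesis applied to $K$ yields $\mu_k(D_K) \leq d_k(K)$, and a degree comparison $d_k(K) \leq d_{k+1}(G)$ would close the chain to $\lambda_{k+1}(D_G) \leq d_{k+1}(G)$.

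The hard step is precisely this degree comparison. In the connection case it is automatic, since the maximum connection degree is always attained at a maximal simplex (any superset $y$ of $x$ has at least as large a connection degree as $x$). For the Dirac matrix this monotonicity fails: the maximum Dirac degree can be attained at an interior simplex -- for instance at an edge shared by several triangles -- so removing an arbitrary maximal or minimal simplex need not shift the sorted degree sequence by one position. To recover the shift, one would either choose $x^{\ast}$ more carefully among the available extremal simplices so that the $d_k(K) \leq d_{k+1}(G)$ inequality survives, or remove several simplices at once and invoke the generalized interlacing $\lambda_{k+r}(D_G) \leq \mu_k(D_K)$ from a rank-$r$ restriction. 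Making this bookkeeping uniform across the complex and open-set cases is the combinatorial crux of the proof.
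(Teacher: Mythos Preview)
Your plan---induct on $|G|$, bound $\lambda_1(D)\le\lambda_1(|D|)\le d_1$ via the row-sum bound for the nonnegative matrix $|D|$, and feed Cauchy interlacing from Theorem~2 into the inductive step---is exactly the paper's argument. The paper's own proof is a two-sentence sketch that simply asserts ``the proof is identical to the connection case'' and recalls the spectral-radius estimate; it does not spell out (or even mention) the degree-shift inequality $d_k(K)\le d_{k+1}(G)$. So you have reproduced everything the paper actually offers.

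The concern you raise about that shift is legitimate and is not resolved in the paper. In the connection case the step works because connection degree is monotone under inclusion (any superset of $x$ meets at least as many simplices as $x$ does), so the maximum is always realised at a facet and one may remove that facet; this is what makes $d_k(K)\le d_{k+1}(G)$ go through there. For the Dirac degree this monotonicity genuinely fails---your example of an edge shared by many triangles is exactly the obstruction---and no choice of maximal (or minimal) simplex need have top Dirac degree. The paper supplies no substitute argument, so the ``combinatorial crux'' you isolate is left open by the paper's text; you have been more careful than the source here.
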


\begin{proof}
The proof is identical to the connection case and works with induction to
the number $n$ of simplices of $G$. In the same way one can estimate the eigenvalues of $D$ first
from above with the eigenvalues of $|D|$ in which all matrix entries are non-negative, then 
use the row sums $d_x= \sum_{y \in G} |D_{xy}|$ and have that the maximum of these sums is an 
upper bound for the spectral radius $\lambda_1$.  
\end{proof} 

\section{Wave equations}

\paragraph{}
As $D,L$ and $g=L^{-1}$ can have both positive and negative eigenvalues, it makes sense to 
compare them and see them on the same footing. Their squares $D^2$ and $L^2$ or $g^2$ produce then
``Laplacians", self-adjoint matrices that have non-negative eigenvalues only. They both can 
serve as generators of a {\bf Dirac wave equation} $u_{tt} = -D^2 u$, as a
{\bf connection wave equation} $u_{tt} = - L^2 u$, or then the {\bf Green connection wave equation}
$u_{tt} = -g^2 u$. These equations are equivalent to 
Schr\"odinger evolution equations $u_t = \pm i D u$ in the Dirac case or
$u_t = \pm i L u$ in the connection case or $u_t = \pm i g u$ in the Green case. 

\paragraph{}
In any of these situations, we have explicit solutions of the wave equations. The only thing which is needed
for such a d'Alembert solution is the ability to see the Laplacian as a square of an other operator. 
Define first the {\bf sinc function} ${\rm sinc}(x) = \sin(x)/x$. It is an entire function as one can 
see by dividing the Taylor series of $\sin$ by $x$. 

\begin{thm}
a) The Dirac wave equation is solved by $u(t) = \cos(D t) u(0) + t \; {\rm sinc}(D t) u'(0)$.
b) The connection wave equation is solved by $u(t) = \cos(L t) u(0) + t \; {\rm sinc}(L t) u'(0)$.
c) The Green wave equation is solved by $u(t) = \cos(g t) u(0) + t \; {\rm sinc}(g t) u'(0)$.
\end{thm}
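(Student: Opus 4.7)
The plan is to treat all three cases uniformly by setting $A$ equal to any of $D$, $L$, or $g=L^{-1}$ and verifying by direct differentiation that $u(t) = \cos(At)u(0) + t\,{\rm sinc}(At)\,u'(0)$ solves $u_{tt} = -A^2 u$ with the prescribed Cauchy data.

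First, I would set up the functional calculus. Both $L$ (symmetric $0$-$1$ matrix since $x\cap y \neq \emptyset$ is a symmetric relation) and $D = d+d^*$ (symmetric by construction) are real symmetric, and so is $g = L^{-1}$. Hence each of $A = D, L, g$ is diagonalizable with real spectrum, and for any entire function $f$ the matrix $f(At)$ is well defined by applying $f$ eigenvalue-wise (equivalently, via the globally convergent Taylor series). The crucial point is that ${\rm sinc}(z) = \sin(z)/z$ is entire, so $t\,{\rm sinc}(At)$ is defined even when $A$ has $0$ in its spectrum: on an eigenvector of $A$ with eigenvalue $\lambda \neq 0$ it acts as $\sin(\lambda t)/\lambda$, and for $\lambda = 0$ it acts as $t$.

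Second, I would compute the two relevant derivatives. Term-by-term differentiation of the cosine series gives $\frac{d}{dt}\cos(At) = -A\sin(At)$. For the second piece, $\frac{d}{dt}\bigl[t\,{\rm sinc}(At)\bigr] = \cos(At)$; on the $\lambda \neq 0$ eigenspaces this is the one-variable identity $\frac{d}{dt}[\sin(\lambda t)/\lambda] = \cos(\lambda t)$, and on the kernel of $A$ it reads $\frac{d}{dt}[t] = 1 = \cos(0)$. Combining these,
\[
u'(t) = -A\sin(At)\,u(0) + \cos(At)\,u'(0),
\]
and therefore
\[
u''(t) = -A^2\cos(At)\,u(0) - A\sin(At)\,u'(0) = -A^2\bigl[\cos(At)u(0) + t\,{\rm sinc}(At)u'(0)\bigr] = -A^2 u(t),
\]
using $A\sin(At) = A^2 \cdot t\,{\rm sinc}(At)$ on each eigenspace.

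Third, checking the initial conditions is immediate: $\cos(0) = I$ and ${\rm sinc}(0) = 1$ give $u(0)$ and $u'(0)$ back at $t=0$. Specializing $A$ to $D$, $L$, and $g$ yields the three claims (a), (b), (c).

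The only subtlety, and the reason the statement is phrased with $t\,{\rm sinc}(At)$ rather than the more classical $A^{-1}\sin(At)$, is the handling of the kernel of $A$: $D$ always has nontrivial kernel (its dimension is $\sum_k b_k$ by Hodge theory), so writing $\sin(At)/A$ would require extending by continuity. Using the entire function ${\rm sinc}$ sidesteps this without any extra argument, and everything else is one-variable calculus applied in each eigenspace.
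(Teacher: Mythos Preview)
Your proof is correct and follows the same approach as the paper: verify directly via the functional calculus that the formula satisfies both the equation and the initial data. The paper's own argument is considerably terser---it only records that $\cos$ and ${\rm sinc}$ are entire and that the initial conditions match---whereas you actually carry out the differentiation and explain why ${\rm sinc}$ is preferred over $A^{-1}\sin(At)$; this is a welcome level of detail rather than a different method.
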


\begin{proof} 
Both $\cos$ and ${\rm sinc}$ are entire functions. We obviously have for $t=0$ the initial condition $u(0)$ right.
Now note that $t {\rm sinc}(D t)$ has at $t=0$ the derivative $1$ so that also $u'(0)$ matches. 
\end{proof} 

\paragraph{}
If $||L t|| <\pi/2$ (rsp. $||g t||<\pi/2$ or $||D t||<\pi/2$), the inverse problem of getting for any 
vector $q$ and initial position $p=u(0)$, we can find an initial velocity $u'(0)$ such that $u(t) = q$. 
Scaling $t$ to $t/c$ obviously is equivalent to scaling $D$ to $D c$. 

\begin{coro}
Assume $cD$ as norm smaller than $\pi/2$, then the wave equation  $u''= -c^2 D^2 u$ 
has for a given initial end time boundary conditions $u(0)$ and $u(1)$ a unique solution. 
The same statement holds if $D$ is replaced by $L$ or by $g$. 
\end{coro}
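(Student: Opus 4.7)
The plan is to reduce the two-point boundary value problem to a single invertibility statement, using the explicit d'Alembert formula of the preceding theorem. After replacing $D$ by $cD$ in part (a), the general solution of $u'' = -c^2 D^2 u$ takes the form
\[
u(t) = \cos(cDt)\, u(0) + t\, \mathrm{sinc}(cDt)\, u'(0),
\]
and evaluating at $t=1$ gives the linear equation
\[
\mathrm{sinc}(cD)\, u'(0) = u(1) - \cos(cD)\, u(0)
\]
in the unknown initial velocity. Existence and uniqueness for the boundary problem $(u(0),u(1))$ are therefore equivalent to invertibility of the single matrix $\mathrm{sinc}(cD)$.

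\paragraph{}
For this invertibility I would invoke the spectral theorem. Since $D = d + d^*$ is real symmetric, it is diagonalizable with real eigenvalues $\mu_j$, and $\|cD\|$ equals $\max_j |c\mu_j|$. The hypothesis $\|cD\| < \pi/2$ therefore places every $c\mu_j$ in the open interval $(-\pi/2, \pi/2)$. On this interval the entire function $\mathrm{sinc}(x) = \sin(x)/x$ is strictly positive, since its zeros occur only at $k\pi$ with $k \neq 0$. Functional calculus then produces a positive definite, hence invertible, matrix $\mathrm{sinc}(cD)$. Solving uniquely for $u'(0)$ and plugging back into the d'Alembert formula yields the unique solution of the boundary value problem.

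\paragraph{}
For the statements with $L$ or $g = L^{-1}$ in place of $D$, the argument transfers verbatim. Indeed, $L(x,y) = L(y,x)$ by symmetry of intersection makes $L$ real symmetric, and the inverse $g$ is then symmetric as well, so the spectral theorem again applies and the norm bound $\pi/2$ pins the whole spectrum inside $(-\pi/2, \pi/2)$. The only genuine step is the spectral non-vanishing of $\mathrm{sinc}$, and this is the main (quite mild) obstacle: one must notice that the bound $\pi/2$ actually leaves slack, since any bound strictly below $\pi$ would already suffice for invertibility. The write-up's main job is to flag explicitly that $D$, $L$, and $g$ are self-adjoint so that the operator norm really controls the full spectrum; beyond that I do not anticipate any difficulty.
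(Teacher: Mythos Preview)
Your proof is correct and follows essentially the same route as the paper: write the general solution via the d'Alembert formula, evaluate at $t=1$, and reduce the two-point problem to the invertibility of $\mathrm{sinc}(cD)$, which holds because the norm bound keeps the spectrum away from the zeros of $\mathrm{sinc}$. Your version is more detailed in justifying the invertibility via the spectral theorem and in checking that $L$ and $g$ are symmetric; your remark that any bound strictly below $\pi$ would already suffice is a valid sharpening not mentioned in the paper.
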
 
\begin{proof} 
Use the solution formula $u(t) = \cos(c D t) u(0) + t {\rm sinc}(c D t) u'(0)$
which gives $u'(0) = {\rm sinc}(c D)^{-1} [u(1) - \cos(cD) u(0) ]$. Because
$cD$ had norm smaller than $\pi/2$ the operator ${\rm sinc}(cD)$ is invertible. 
\end{proof} 

\begin{figure}[!htpb]
\scalebox{0.55}{\includegraphics{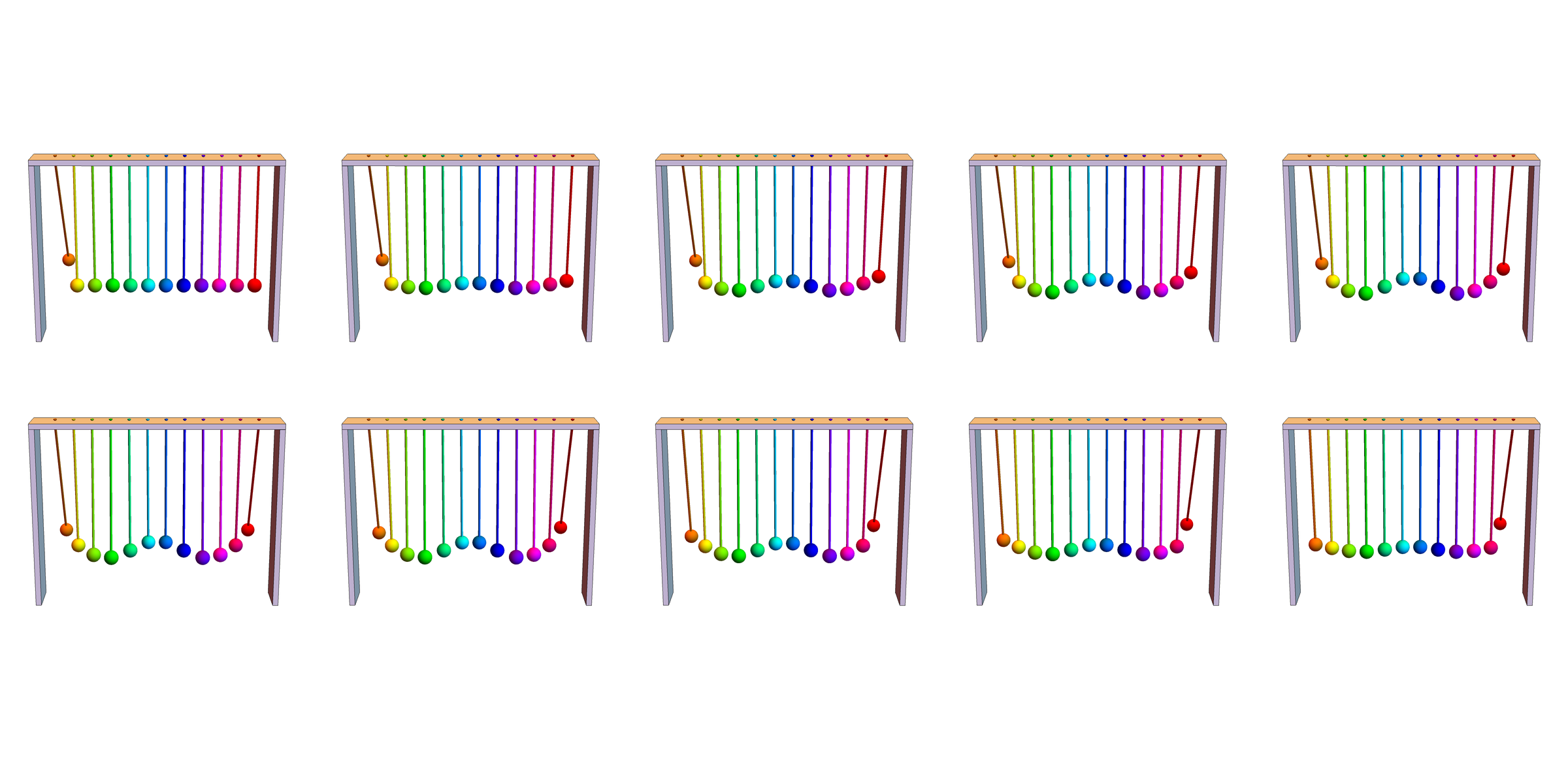}}
\label{Craddle}
\caption{
A Newton craddle toy illustrates the interpolation result.
We can have the first pendulum to be the only nonzero at t=0
and the last pendulum to be the only nonzero one at t=1.
We can hit all the penduli with a suitable initial velocity
so that the wave interpolates between the two positions.
}
\end{figure}

\begin{figure}[!htpb]
\scalebox{0.55}{\includegraphics{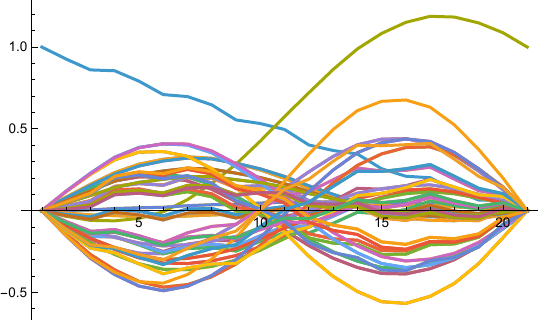}}
\scalebox{0.55}{\includegraphics{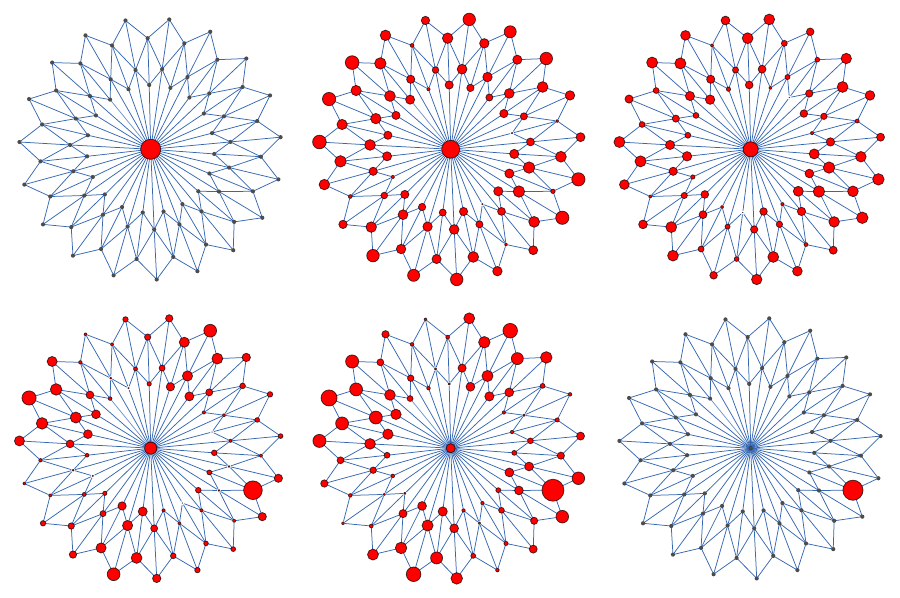}}
\label{Wave}
\caption{
We see solution of the wave equation $u_{tt} = -Lu$ on a finite geometry that starts
localized at one point and ends at the other point. The initial velocity $u'(0)$ is explicitly given.
We have visualized the wave $\{u(x,t)\}_{x \in G}$ by scaling the vertex
size of each vertex $x$ in the graph $(G,\{ (x,y), x \subset y || y \subset x\}$
belonging to the finite abstract simplicial complex $G$.
}
\end{figure}

\paragraph{}
If $u(0)$ is localized on one simplex $x \in G$ and $u(1)$ is localized on
a second simplex $y \in G$ and $x,y$ are far remote. How come one can travel in time $t=1$ from
$x$ to $y$? The paradox is resolved by noting that the initial velocity $u'(0)$ needed to go from $x$
to $y$ is not localized at $x$. It spreads over the entire stretch from $x$ to $y$. We even can get
from $x$ to $y$ if $x$ and $y$ are in different connectivity components of $G$: what will happen is that
in one of the components we have an initial velocity $u'(0)$ and position $u(0)=e_x$ so 
that $u_1(x)=0$ for all $x$ in the first component.  In the other connectivity component, 
we have that $u_0=0$ and $u_1=e_y$. There is an initial velocity which gets us there. 

\paragraph{}
We should also note that for wave equations with continuum time $t \in \mathbb{R}$ and discrete
space $G$, there is no finite propagation speed. There is no Huygens principle like in the 
continuum where one has a strong Huygens principle in odd dimensions or weak Huygens principle in even dimensions
(see for example \cite{Evans2010} page 73). If we write the solution to the wave equation $u_{tt} = - D^2 u$
in terms of eigenfunctions, then these eigenfunctions are global. Removing a simplex $x$ at the end
of the universe $G$ impacts the eigenvalues and eigenfunctions of the Laplacian $D^2$ and so impacts 
the solution to the wave equation. The reason is that if $G$ all solutions are entire functions in $t$; 
we even wrote down the solutions explicitly in terms of $\cos$ and ${\rm sinc}$ functions.

\paragraph{}
We have just seen that starting a wave so that $u(0)$ is localized at some simplex $x$ and 
zero everywhere else. Then the solution of $u(t)$ involves every 
other simplex in the complex, even at the end of the ``universe" $G$. However, by going to discrete time, we 
achieved to have a causal wave evolution. At time $t$, only simplices up to distance $t$ matter. 
This {\bf causality issue} can be important when making numerical experiments because we like to 
experiment with parts of the world $G$ which are manageable and know that parts far away do not 
affect a given simplex for some time. In order to understand the spectral 
properties in an infinite lattice for example, we can explore the problem by looking at solutions of 
the Schr\"odinger equations as the spectral measures are directly related to dynamical properties
See \cite{Kni97,Kni98}. To solve the {\bf causality problem}, we need also to {\bf discretize time}.

\paragraph{}
If $L$ or $D$ or $g$ are of norm smaller than $1$, we can now look at the
{\bf symplectic wave map} $(u,v) \to (2 L u-v,u)$ or $(u,v) \to (2Du-v,u)$ or
$(u,v) \to (2g u-v,u)$ which are conjugated to a pair of unitary evolutions $(u,v) \to (U u, U^* u)$.
These are ``cellular automata" maps for Laplacians $H$ like $D^2,L^2$ or $g^2$ because in all these
cases matrix entries $H(x,y)$ are zero if $U(x),U(y)$ do not intersect. 
The discretization of time comes with a change of the operator $L \to \tilde{L}$ but the spectral 
type of the operator does not change. Of course, if $G$ is finite, there are just eigenvalues and there
is no singular or absolutely continuous spectrum. The change to a discrete time however solves the
causality issue and the wave needs time to get from $x$ to $y$. If $x,y$ are far away, then an initial
wave for which the initial times $u_0,u_{1}$ are localized near $x$ needs some time to reach $y$ and
this depends on the distance in the Dirac graph, or Connection graph or Green graph. It looks at first
like a philosophical issue, but it is helpful in numerical simulations. It is also nice to see actual
wave equations interpolated by cellular automata maps. 

\begin{thm}
If $||cD||<1$, there is a symplectic matrix $(u,v) = (2Du-v,u)$ on $\mathbb{R}^{2n}$ that
interpolates $(u_t,v_t) = i (c \tilde{D}, -c \tilde{D})$.  The same statement holds if 
$D$ is replaced by $L$ or by $g$ and the matrices are scaled such that $||cL||<1$ or that $||c g||<1$.
\end{thm}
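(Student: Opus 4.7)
The plan is to verify two things: that the linear map $M\colon (u,v) \mapsto (2Du-v,\,u)$ preserves the standard symplectic form on $\mathbb{R}^{2n}$, and that it is the time-$1$ map of an elliptic one-parameter subgroup of $\mathrm{Sp}(2n,\mathbb{R})$ whose generator, after passing to the complex coordinate $w = u+iv$, is precisely the Schr\"odinger pair $w_t = ic\tilde{D} w$, $\bar{w}_t = -ic\tilde{D}\bar{w}$ asserted in the statement.

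First I would check symplecticness by direct computation. Writing the map as the block matrix $M = \bigl(\begin{smallmatrix} 2D & -I \\ I & 0 \end{smallmatrix}\bigr)$ and the standard symplectic form as $J = \bigl(\begin{smallmatrix} 0 & I \\ -I & 0 \end{smallmatrix}\bigr)$, one finds $JM = \bigl(\begin{smallmatrix} I & 0 \\ -2D & I \end{smallmatrix}\bigr)$ and therefore $M^\top J M = \bigl(\begin{smallmatrix} 2D^\top - 2D & I \\ -I & 0 \end{smallmatrix}\bigr)$, which equals $J$ precisely because $D = D^\top$. The same one-line check works verbatim for $L$ and $g = L^{-1}$, since both are symmetric.

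Next I would diagonalize. Because $D$ is self-adjoint, pick an orthonormal basis of eigenvectors of $D$ with eigenvalues $\lambda_1,\dots,\lambda_n$; acting block-diagonally on $\mathbb{R}^n \oplus \mathbb{R}^n$ with this basis conjugates $M$ to a direct sum of $2\times 2$ blocks $M_j = \bigl(\begin{smallmatrix} 2\lambda_j & -1 \\ 1 & 0 \end{smallmatrix}\bigr)$. The hypothesis $\|cD\| < 1$, after absorbing $c$ into the scaling, places each $\lambda_j$ in $(-1,1)$, so each block has trace $2\lambda_j \in (-2,2)$ and determinant $1$; its eigenvalues are $e^{\pm i\theta_j}$ with $\theta_j = \arccos(\lambda_j) \in (0,\pi)$, placing $M_j$ in the strictly elliptic part of $\mathrm{Sp}(2,\mathbb{R})$. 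Hence $M_j$ is conjugate to the planar rotation $R(\theta_j)$ and equals $\exp(\theta_j A_j)$ for a generator $A_j$ similar to $\bigl(\begin{smallmatrix} 0 & -1 \\ 1 & 0 \end{smallmatrix}\bigr)$. Defining $\tilde{D}$ by functional calculus via $c\tilde{D} = \arccos(cD)$ gives a positive self-adjoint operator, diagonal in the same basis, such that $M$ is the time-$1$ map of the symplectic flow with linear vector field $J\,(c\tilde{D} \oplus c\tilde{D})$ on $\mathbb{R}^{2n}$. Passing to $w = u+iv$ and its conjugate, each planar rotation of angle $c\tilde{D}$ becomes exactly the pair $w_t = ic\tilde{D} w$, $\bar{w}_t = -ic\tilde{D}\bar{w}$. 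The arguments for $L$ and $g$ are identical, using only self-adjointness and a spectrum inside $(-1/c, 1/c)$.

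The main obstacle is guaranteeing strict ellipticity of every $2\times 2$ block, so that $\arccos$ admits an unambiguous smooth positive branch on the spectrum and the symplectic logarithm is real. This is exactly the role of the scaling hypothesis $\|cD\| < 1$ (respectively $\|cL\| < 1$ or $\|cg\| < 1$): without it, one could meet parabolic or hyperbolic blocks where the conjugation to a rotation — and hence the identification with a Schr\"odinger flow — would break down. Once strict ellipticity is secured, the remainder is routine functional calculus.
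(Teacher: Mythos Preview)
Your proof is correct and follows essentially the same route as the paper's: both hinge on the fact that when the spectrum of $cD$ lies in $(-1,1)$, the map $B=\bigl(\begin{smallmatrix}2D&-I\\I&0\end{smallmatrix}\bigr)$ is conjugate to the unitary pair $\mathrm{diag}(U,U^*)$ with $U=\exp\bigl(i\arccos(cD)\bigr)=cD+i\sqrt{I-c^2D^2}$. The paper simply writes down the global conjugacy matrix $S=\bigl(\begin{smallmatrix}U^*&U\\I&I\end{smallmatrix}\bigr)$ and records the Chebyshev identities $U^n=T_n(L)+iR_n(L)$, while you reach the same conclusion by first diagonalizing $D$ and then treating each resulting $2\times2$ block as an elliptic element of $\mathrm{Sp}(2,\mathbb{R})$; your direct check $M^\top JM=J$ is a helpful clarification that the paper leaves implicit.
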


\begin{proof} 
In \cite{Kni98} we scale $L$ to have norm smaller than 1, then took
$\tilde{L}=\arccos{L}$ and use $L=\cos(\tilde{L})$ to get
$U^n=\exp(i n \tilde{L})) = \cos(n \arccos(L)) + i \sin(n \arccos(L)) = T_n(L) +  i R_n(L)$
with {\bf Chebychev polynomials} $T_n(x) = \cos(n \arccos(x)), R_n(x) = \sin(n \arccos(x))$.
The unitary $U = L + i \sqrt{1-L^2}$ solves $U+U^*=2L$. The two evolutions
$$ B= \left[ \begin{array}{cc}  2D & -1 \\ 1 & 0 \\ \end{array}  \right], \;
   A= \left[ \begin{array}{cc}  U & 0 \\
                                0 & U^* \\ \end{array}  \right]   \;   $$
are conjugated via $B=S^{-1} A S$ using $S= \left[ \begin{array}{cc} U^* & U \\ 1   & 1 \\ \end{array} \right]$.
While $u(n) = T_n(L) u(0) \pm R_n(L) u'(0)$ are not unitary, they are conjugated to unitary evolutions. \\
We also mentioned that $V=-i U$ solves $i (V-V*) = 2L$ if $U+U^* = 2 L$ so that one can replace
$\cos$ with $\sin$ which makes the similarity more clear. We have then compared three cases
$$  \exp(i \arcsin(2 h L)) = 1-2 i h L - 2 h^2 L^2 - 2h^4 L^4 ...   $$
$$  \exp(2 i h L) = 1-2i h L - 2 h^2 L^2+(4/3) i h^3 L^3 ....     $$
$$ (h L - i)/(h L + i) = 1-2 i h L - 2 h^2 L^2 + 2 i h^3 L^3 ....  $$
where the later is the Cayley method. They all agree to second order $1-2i h L - 2 h^2 L^2$. 
\end{proof}

\paragraph{}
It is worth noting 
that the symplectic discretization of the wave equation is a time 1 step of an actual wave equation, just for a slightly
deformed operator $\tilde{L}= \arcsin(L)$ or $\tilde{D}= \arcsin(D)$ or $\tilde{g} = \arcsin(g)$
and it is causal as every time step is of a cellular automaton type and so satisfies a weak 
Huygens principle (the strong Huygens principle is a miracle that happens only for odd dimensional manifolds).
We therefore also have explicit solutions to these discrete time wave equations like
$u(t) = \cos(t \tilde{D}) u(0) + t {\rm sinc}(t \tilde{D}) u'(0)$. How come that the above 
causality does not work here? It would, if we look at the solution for 
general $t \in \mathbb{R}$. But we only look for integer $t$: indeed, the
unitary evolution is for integer $t$ given by Chebyshev polynomials for integer $t$. 
$e^(\pm i t arcsin(2h L) ) = T_t(L) \pm i R_t(L)$. For $t=1.5$ however the solution $u(t)$ 
would depend on all the space $G$ and even on simplices at the other and of $G$. 

\section{Weak Loewner order}

\paragraph{}
Assume $A$ is a self-adjoint $n \times n$ matrix with eigenvalues 
$\lambda_1 \geq \lambda_2  \geq \cdots \geq \lambda_n$ ordered in a decreasing manner.
Define the {\bf spectral sum} $S_k(A) = \lambda_1 + \lambda_2 + \cdots + \lambda_k$. The relation 
$S_k(A) \leq S_k(B)$ for all $1 \leq k \leq n$ defines a {\bf partial order} on 
the set of self-adjoint matrices. (As for reflexivity, note that $S_1(A) \leq S_1(B)$
and $S_1(B) \leq S_1(A)$ implies $\lambda_1(A)=\lambda_1(B)$. 
Then use $S_2(A) \leq S_2(B)$ and $S_2(B) \leq S_1(A)$ that 
$\lambda_2(A) = \lambda_2(B)$ etc. As this order is weaker than 
the {\bf Loewner order}, we call it the {\bf weak Lowener order}.  

\paragraph{}
The following was our starting point for this note as we aimed to prove that $L$ spectral dominates
$D$ in a weak Loewner sense. The research then then meandered in the fall of 2025 to other topics.
The entry point was was a {\bf  Brouwer conjecture} project from the summer 2025 \cite{knillbrouwer}.
It is a line of research, where the spectral sum $S_k(A)$ of a matrix is the focus of attention. 
The Brouwer conjecture \cite{Brouwer} deals with the spectra of the {\bf Kirchhoff matrix} $H_0(G)$ of a complex. 
The conjecture is $S_k \leq B_k$ for the spectral sum $S_k = \sum_{j=1}^k \lambda_j$ of the Kirchhoff matrix
is bounded above by the {\bf Brouwer bound} $B_k(G)=m+k(k+1)/2$, where $m=f_1(G)$ is the number of edges in 
$G$. 

\paragraph{}
When looking at the spectra, it is natural to compare the spectra of $L$ and $D$ (as seen in Figure~1). 
The difference $L-D$ is not positive definite in general. We can ask  a different question however:
is $L-D$ is positive in a {\bf weak Loewner order}? Experiments suggest that this is the case, but we can not prove
it yet. Since the diagonal elements of $L$ are $1$, we know that from the Schur inequality that $S_k(L) \geq k$. 
Schur also gives $S_k(D) \geq 0$ which is obvious as the eigenvalues of $D$ are symmetric: if $\lambda$
is an eigenvalue, then $-\lambda$ is an eigenvalue. 
We see evidence however that $L$ is larger than $D$ in the weak Loewner sense: 

\begin{conj}
If $L$ is the connection matrix and $D$ the Dirac matrix, then $L-D$ 
satisfies $S_k(L) \geq S_k(D)$.
\end{conj}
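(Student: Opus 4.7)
The plan is to combine the Ky Fan variational principle with the parity block structure of the Dirac matrix. By Ky Fan, $S_k(A) = \max_{Q^T Q = I_k} {\rm tr}(Q^T A Q)$, so to prove $S_k(L) \geq S_k(D)$ it suffices to produce a single orthonormal $k$-frame $Q$ with ${\rm tr}(Q^T L Q) \geq S_k(D)$. Since the exterior derivative $d$ raises dimension by one, ordering the simplices of $G$ by parity puts $D$ in block-antidiagonal form $D = \left( \begin{smallmatrix} 0 & B^T \\ B & 0 \end{smallmatrix} \right)$, so the positive spectrum of $D$ is exactly the singular spectrum $\sigma_1 \geq \sigma_2 \geq \cdots$ of $B$, with corresponding eigenvectors $\tfrac{1}{\sqrt{2}}(u_i, w_i)$ given by the singular triples $B u_i = \sigma_i w_i$. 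In the same block partition $L = \left( \begin{smallmatrix} L_{ee} & L_{eo}^T \\ L_{eo} & L_{oo} \end{smallmatrix} \right)$ we have $L_{eo} \geq |B|$ entrywise, because any inclusion $x \subset y$ is in particular an intersection.

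The natural first move is to feed the top-$k$ eigenvectors of $D$ into the Ky Fan bound for $L$. A direct computation then yields
\[
{\rm tr}(Q^T L Q) = S_k(D) + {\rm tr}\bigl(W^T (L_{eo} - B)\,U\bigr) + \tfrac{1}{2}\bigl[\,{\rm tr}(U^T L_{ee} U) + {\rm tr}(W^T L_{oo} W)\,\bigr],
\]
reducing the conjecture to showing that the two correction terms have nonnegative sum. Since $L_{ee}$ and $L_{oo}$ are $0$-$1$ matrices with unit diagonal, the bracket contributes $k$ from the unit-diagonal traces plus sign-indefinite off-diagonal pieces, while the cross term pits the nonnegative entries of $L_{eo}-B$ against the sign-indefinite singular vectors of $B$. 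The main obstacle is precisely this sign issue: the naive entrywise monotonicity $|A|\leq B \Rightarrow \|A\|_{(k)} \leq \|B\|_{(k)}$ is false already for symmetric $2\times 2$ matrices (small examples give $\sum|\lambda(A)| > \sum|\lambda(B)|$), so any proof must exploit the exterior-algebra origin of the signs in $B$, namely $d^2=0$ and the resulting cancellations in the coboundary.

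As a fallback I would try induction on $|G|$ via Cauchy interlacing. Removing a maximal simplex $x$ makes $L_K$ and $D_K$ principal submatrices of $L_G$ and $D_G$, so both spectra interlace by the theorems of the previous two sections. Writing $L_G - D_G = (L_K - D_K) \oplus (1) + R$, where $R$ is a rank-two border perturbation with nonnegative entries, one would hope to propagate the inductive hypothesis $S_k(L_K) \geq S_k(D_K)$ to $G$ by solving in parallel the two secular equations that govern the new top eigenvalues and comparing them term by term. The hard part will again be that Cauchy interlacing alone is too coarse to compare spectral sums on the nose; the eventual closure will very likely have to combine the McKean-Singer symmetry $\lambda_j(D) = -\lambda_{n-j+1}(D)$, the Schur-Horn bound $S_k(L) \geq k$ coming from the unit diagonal of $L$, and some combinatorial input from the incidence structure to pin down the correct sign convention on the singular triples of $B$ that makes ${\rm tr}(W^T(L_{eo}-B)U)$ manifestly nonnegative.
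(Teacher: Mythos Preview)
This statement is a \emph{conjecture} in the paper, not a theorem: the paper says explicitly ``but we can not prove it yet'' and offers only failed attempts (Fan's subadditivity $S_k(D)\leq S_k(L)+S_k(D-L)$ going the wrong way, entrywise nonnegativity of $L-D$ not implying spectral nonnegativity, and the Schur--Horn bound $S_k(L-D)\geq k$). There is no proof in the paper to compare against.

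Your proposal is likewise not a proof but a structured exploration of two strategies, each with a gap you yourself flag. The Ky Fan reduction is correct: feeding the top-$k$ eigenvectors $q_i=\tfrac{1}{\sqrt2}(u_i,w_i)$ of $D$ into $\mathrm{tr}(Q^TLQ)$ does yield $S_k(D)+\mathrm{tr}\bigl(W^T(L_{eo}-B)U\bigr)+\tfrac12\bigl[\mathrm{tr}(U^TL_{ee}U)+\mathrm{tr}(W^TL_{oo}W)\bigr]$, and the entrywise bound $L_{eo}\geq|B|$ holds since inclusion implies intersection. But, as you say, the singular vectors $U,W$ carry signs, $L_{ee}$ and $L_{oo}$ are not positive semidefinite, and neither correction term is visibly nonnegative; the $2\times2$ obstruction you cite is exactly the issue, and nothing in your write-up overcomes it. The interlacing fallback is similarly left open: you correctly observe that Cauchy interlacing on both $L$ and $D$ is too coarse to control partial sums, and the hoped-for ``closure'' combining McKean--Singer symmetry, Schur--Horn, and unspecified combinatorics is a wish list, not an argument. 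Your discussion is more detailed than the paper's own attempts and isolates the obstruction more sharply, but it does not settle the conjecture; it would be more accurately framed as partial progress and heuristics rather than as a proof proposal.
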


\paragraph{}
Here are some attempts:
Fan's theorem tells that $S_k(A+B) \leq S_k(A)+S_k(B)$ for all $1 \leq k \leq n$ in general. 
See \cite{Weyl1949,Fan1949} and \cite{DuvalReiner} call this spectral majorization. 
Fan's theorem also implies $S_k(D) \leq S_k(L)+ S_k(D-L)$ but that gives only $S_k(L) \geq S_k(D)-S_k(D-L)$.  
A similar argument gives $S_k(L) \leq S_k(D) + S_k(L-D)$.
we measure that $S_k(L+D)$ and $S_k(L-D)$ are very close.

\paragraph{}
The matrix $L-D$ has positive diagonal entries and is non-negative but that does not imply that it
has no-negative eigenvalues. The matrix $\left[ \begin{array}{cc} 3  &4  \\ 4  & 3  \end{array} \right]$ 
for example has eigenvalues $7,-1$ even so it is positive and has all positive entries.
Schur's theorem tells that if $B_{ij} >=0, \forall i,j$, then $S_k(B) \geq 0$.
For a Dirac matrix, we also have $S_k(-D) \geq 0$ simply because the spectra of $D$ and $-D$ agree. 
The identity $k \leq S_k(L-D) \leq S_k(L) + S_k(D)$  shows that $S_k(L) \geq k-S_k(D)$.

\section{Lefschetz fixed point Theorem}

\paragraph{}
The {\bf discrete Lefschetz fixed point theorem} \cite{brouwergraph}
works for any {\bf simplicial dynamical systems} $(G,T)$, where
$T$ is a {\bf simplicial map} and $G$ is a simplicial complex. Unlike in the continuum, where 
one needs to add some assumptions like regularity or that $T$ only has finitely many fixed points,
there are no conditions whatsoever in the discrete. 
We review first the language of simplicial complexes as \cite{brouwergraph} formulated the theorem
for Whitney complexes of graphs, which is almost no loss of generality as every simplicial complex 
defines a graph. The proof in \cite{brouwergraph} which was closer to Hopf's proof in the continuum.
As mentioned in various expositions like \cite{DiscreteAtiyahSingerBott} 
before, it can be proven by {\bf heat deformation}. We give here a bit more details. 

\paragraph{}
A simplicial complex $G$ comes with the {\bf Alexandrov topology}, in which the stars 
$U(x) = \{ y \in G, x \subset y \}$ form a topological basis. 
A {\bf simplicial map} is a continuous map that originates from a 
map on vertices. The Lefschetz formula generalizes the Euler-Poincar\'e formula and implies 
the Brouwer's theorem assuring that a simplicial map on a
complex homotopic to $K_1$ has at least one fixed point. 
\footnote{A.E. Brouwer from the Brouwer conjecture is not related to
L.E.J. Brouwer from the Brouwer fixed point theorem. The later had no children of his own.}

\paragraph{}
If a map $T$ on a simplicial complex is continuous then it preserves the order structure. But a continuous
map is not necessarily a simplicial map in general: a simplicial map can not map smaller dimensional parts
to higher dimensional ones. For example, a constant map $G \to G, x \to z$ which maps to a constant 
positive dimensional $z \in G$ is continuous, but it is not a simplicial map. The Lefschetz theorem needs 
to access orientation of a simplex in an other simplex and so needs that $T$ comes from a simplicial map. 
It seems not impossible that there might be a generalization to continuous maps but we have not seen this
to work yet. 

\paragraph{}
Lets first define some parts which enter the Lefschetz fixed point theorem. 
If $T: G \to G$ is a simplicial map, a {\bf fixed point} $x \in G$ is a simplex that satisfies $T(x)=x$. 
Denote with $F=F_T$ the set of fixed points. Define $\omega(x)=(-1)^{{\rm dim}(x)}$.
If $T$ is a simplicial map, the {\bf index} of $x$
is defined as $i_T(x)=\omega(x) {\rm sign}(T|x)$. The map $T$ induces the {\bf Koopman
matrix} $\mathcal{U}_T f = f(T(x))$, which as a linear map on $l^2(G)$ and so a $n \times n$ matrix. 

\begin{lemma}
${\rm str}(\mathcal{U}_T)=\sum_{x \in F} i_T(x)$ 
\end{lemma}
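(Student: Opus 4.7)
The plan is to show that the lemma is essentially a direct bookkeeping exercise once the Koopman matrix is written in an oriented basis. I would first fix a choice of orientation on each simplex $x \in G$, so that the standard basis $\{e_x\}_{x \in G}$ of $l^2(G)$ corresponds to oriented simplices. The Koopman operator $\mathcal{U}_T$, though originally defined on functions via $(\mathcal{U}_T f)(x) = f(T(x))$, induces via duality a signed permutation on this basis: for a simplex $x=\{v_0,\dots,v_k\}$, the map $T$ sends the ordered tuple $(v_0,\dots,v_k)$ to $(T(v_0),\dots,T(v_k))$, and the result is either a lower-dimensional simplex (if $T|_x$ collapses vertices) or a simplex of the same dimension with an induced sign from the permutation on vertices.

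Next I would compute the diagonal entries $(\mathcal{U}_T)_{xx}$ one by one. There are only two cases. If $T(x) \neq x$ as a set in $G$, then $\mathcal{U}_T e_x$ lies in the span of basis vectors different from $e_x$, so the diagonal contribution vanishes. If $T(x)=x$, then $x$ is a fixed simplex, $T|_x$ is a bijection of the vertex set of $x$, hence a permutation, and the diagonal entry is exactly the signature $\mathrm{sign}(T|x)$ of that permutation on the ordered vertex set.

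Having these entries, the conclusion is immediate from the definition of the super-trace:
\begin{equation*}
\mathrm{str}(\mathcal{U}_T) \;=\; \sum_{x \in G} \omega(x)\, (\mathcal{U}_T)_{xx}
\;=\; \sum_{x \in F} \omega(x)\, \mathrm{sign}(T|x)
\;=\; \sum_{x \in F} i_T(x),
\end{equation*}
using $i_T(x) = \omega(x)\,\mathrm{sign}(T|x)$ as defined just before the lemma.

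The main obstacle, if any, is only a matter of precision: one must be careful that the Koopman operator being used is the one acting on signed/oriented cochains, because the naive unsigned action on $\{0,1\}$-valued indicator functions would produce $\sum_{x \in F} \omega(x)$ and miss the $\mathrm{sign}(T|x)$ factor. Once that convention is nailed down, and once one observes that a simplicial map can only collapse dimensions and therefore a fixed simplex is automatically mapped to itself by a vertex permutation (so that $\mathrm{sign}(T|x)$ is unambiguously defined), there is nothing further to check. Everything else is a trivial unfolding of definitions.
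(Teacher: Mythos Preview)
Your argument is correct. You compute the diagonal of the signed Koopman matrix entry by entry: zero off the fixed set, and $\mathrm{sign}(T|_x)$ on a fixed simplex $x$. The super-trace then collapses to $\sum_{x\in F}\omega(x)\,\mathrm{sign}(T|_x)=\sum_{x\in F}i_T(x)$ by definition. You also correctly isolate the one real subtlety, namely that $\mathcal{U}_T$ has to be the induced map on \emph{oriented} cochains; with the naive unsigned pull-back the statement would be false.

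The paper proceeds differently. After disposing of the non-fixed simplices it does not read off the diagonal entry at a single fixed $x$; instead it localizes to the complete complex generated by the vertices of a fixed simplex and treats $T$ there as a permutation with some cycle decomposition. It then computes the \emph{total} index sum over all fixed subsimplices (unions of cycles) and identifies it with $\chi(K_l)=1$, where $l$ is the number of cycles, and asserts that this matches ${\rm str}(\mathcal{U}_T)$ on that piece. So the paper checks agreement of the two \emph{sums} on each complete building block, whereas you check agreement of the two \emph{summands} term by term. Your route is shorter and makes the role of the sign convention explicit; the paper's route has the virtue of exhibiting the cycle/Euler-characteristic combinatorics that reappears later in the examples, but the reduction-to-$K_n$ step is only sketched there.
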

\begin{proof} 
If a vertex $x$ is a fixed point of $T$
then $\mathcal{U}(e_x) = e_x$, meaning that $\mathcal{U}(x,x)=1$. 
If $T(x)=y$ and $x \neq y$, then $\mathcal{U}(x,y)= e_x e_y = 0$. 
The analysis now reduces to a situation where $G$ is a 
complete complex (the set of all non-empty subsets of a set with $n$ elements). 
Then, every permutation $T:G \to G$ is a simplicial map and the fixed points are
unions of cycles in the cycle decomposition of $T$. The index of a cycle $y$
is $1$, the index of a pair of cycles is $-1$, the index of a triplet of cycles is $1$ etc. 
The sum $\sum_{x \in F} i_T(x)$ is in this case the Euler characteristic of a complete complex
$K_l$, where $l$ is the number of cycles of the permutation defined by $T$, which agrees with
${\rm str}(\mathcal{U}_T)$. 
\end{proof}

\paragraph{}
In the special case, when $T$ is the identity, then $\mathcal{U}$ is the identity matrix and 
${\rm str}(\mathcal{U}_T) = \chi(G)$ and $i_T(x) = \omega(x)$. This is a situation where the 
agreement follows from the definitions. 

\paragraph{}
The {\bf Lefschetz number} $\chi_T(G)$ 
is defined to be the super trace of the map $T$ induced on the kernel of $H=D^2 = \oplus_{k=0}^q H_k$:
$$ \chi_T(G) = {\rm str}(\mathcal{U}_T | {\rm ker}(H)) = \sum_{j=0}^q (-1)^q {\rm tr}(\mathcal{U}_T | {\rm ker}(H_k))  \; . $$
It generalizes the {\bf Euler characteristic} $\chi(G)$, which is the situation when $T$ is the identity.
In that case, $\mathcal{U}_T u=u$ is the identity matrix and the super trace on the kernel of $H_k$ is the Betti number $b_k$.
The Lefschetz fixed point theorem then reduces to the Euler-Poincar\'e formula:

\begin{thm}[Lefschetz fixed point theorem]
$\sum_{x, T(x)=x} i_T(x) = \chi_T(G)$ for any
simplicial map $T$ on any simplicial complex $G$. 
\end{thm}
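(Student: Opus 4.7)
The plan is to prove the theorem by a heat-deformation / McKean-Singer argument. The preceding lemma already identifies the ``geometric'' supertrace ${\rm str}(\mathcal{U}_T)$ with the fixed-point sum $\sum_{x \in F} i_T(x)$. The remaining task is to identify this same supertrace with the ``cohomological'' one, ${\rm str}(\mathcal{U}_T|{\rm ker}(H)) = \chi_T(G)$. The bridge is the deformed quantity $f(t) = {\rm str}(\mathcal{U}_T e^{-tH})$, which I would show is simultaneously $t$-independent, satisfies $f(0) = {\rm str}(\mathcal{U}_T)$, and satisfies $\lim_{t \to \infty} f(t) = {\rm str}(\mathcal{U}_T | {\rm ker}(H))$.

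The first substantive step is to verify that the Koopman matrix $\mathcal{U}_T$ commutes with the exterior derivative $d$, and hence with $d^*$, with $D$, and with each Hodge block $H_k$. This is the naturality of the boundary operator under simplicial maps: faces of $T(x)$ are images of faces of $x$, and the incidence signs match up because $T|x$ acts as a well-defined permutation of the vertices of $x$ whenever $\dim T(x) = \dim x$. Tracking these orientation signs, and handling the degenerate case where $T(x)$ has strictly smaller dimension (in which case $\mathcal{U}_T$ annihilates $e_x$ in the relevant graded piece), is the necessary bookkeeping.

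Once commutation is established, $\mathcal{U}_T$ preserves the Hodge decomposition $\Lambda^k = {\rm ker}(H_k) \oplus ({\rm im}\, d \cap \Lambda^k) \oplus ({\rm im}\, d^* \cap \Lambda^k)$, and the McKean-Singer symmetry takes the form that $d$ intertwines the restriction of $\mathcal{U}_T$ to ${\rm im}(d^*) \cap \Lambda^k$ with its restriction to ${\rm im}(d) \cap \Lambda^{k+1}$; these two pieces are isomorphic as $\mathcal{U}_T$-invariant subspaces. Their traces therefore cancel in pairs between consecutive degrees under the supertrace, so that ${\rm str}(\mathcal{U}_T | {\rm im}(H)) = 0$ and hence ${\rm str}(\mathcal{U}_T) = {\rm str}(\mathcal{U}_T | {\rm ker}(H)) = \chi_T(G)$. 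Combining with the lemma closes the proof.

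The main obstacle I expect is the sign bookkeeping in the commutation step: one must arrange that the permutation sign ${\rm sign}(T|x)$ that enters the definition of the index $i_T(x)$ is exactly the sign built into $\mathcal{U}_T$ on oriented chains, so that both the lemma and the intertwining argument use the same convention. Once this is pinned down, the argument reduces to the finite-dimensional linear-algebra fact that $d : ({\rm im}\, d^*) \cap \Lambda^k \to ({\rm im}\, d) \cap \Lambda^{k+1}$ is a $\mathcal{U}_T$-equivariant isomorphism. This is a clean discrete analogue of the classical heat-kernel proof and does not require any genuine ``heat deformation'' beyond this algebraic cancellation, although phrasing it via $f(t)$ and letting $t \to \infty$ is a convenient way to package the two supertrace identities.
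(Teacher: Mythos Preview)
Your heat-deformation outline is exactly the paper's strategy, but there is a genuine gap at the step ``$\mathcal{U}_T$ commutes with $d$, and hence with $d^*$.'' Commutation with $d$ is the naturality of pullback under simplicial maps and holds for any $T$. Commutation with $d^*$ follows only if $\mathcal{U}_T$ is orthogonal, which requires $T$ to be a bijection. For a non-injective simplicial map this fails: already on $K_2$ with the collapse $T(1)=T(2)=1$, one has $\mathcal{U}_T^{(0)} d^* \neq d^*\mathcal{U}_T^{(1)}$. Consequently $\mathcal{U}_T$ need not commute with $H$, need not preserve the Hodge decomposition, and $f(t)={\rm str}(\mathcal{U}_T e^{-tH})$ need not be constant in $t$. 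Your whole bridge from $f(0)$ to $f(\infty)$ rests on this step.

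The paper repairs this by inserting a reduction lemma you have skipped: one first passes to the attractor $K=\bigcap_{k\geq 1}T^k(G)$, shows $K$ is again a simplicial complex and that $T|_K$ is a permutation (hence a homeomorphism), and checks that both sides of the Lefschetz identity are unchanged by this restriction. Only then, with $\mathcal{U}_T$ orthogonal, does commutation with $d$ yield commutation with $D$ and $H$, and the McKean--Singer heat argument goes through. An alternative fix that stays closer to your write-up is to abandon the Hodge decomposition entirely and use only $\mathcal{U}_T d = d\,\mathcal{U}_T$: the Hopf trace argument (filter by $\ker d$ and $\operatorname{im} d$ rather than by the Hodge pieces) shows ${\rm str}(\mathcal{U}_T)={\rm str}(\mathcal{U}_T^*\text{ on cohomology})$ for any chain map, with the $B^k$ and $C^k/Z^k$ contributions telescoping. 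But as written, your proposal invokes $\ker H_k$ and $\operatorname{im} d^*$, which $\mathcal{U}_T$ does not preserve in the non-invertible case.
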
 

\paragraph{}
To prove this, we first reduce the case of a simplicial map to an invertible simplicial map.
While not all continuous maps are simplicial maps, every
homeomorphism of a simplicial complex necessarily is a simplicial map. Proof:
if $v$ is a vertex then $T(v)$ must be a vertex too (as if it would be a positive dimensional 
simplex $x$, then $x=T(v)$ implied $T^{-1}(x)=v$). But then also $T^{-1}(w)=v$ for any vertex 
$w \subset x$, which would contradict invertibility.
A simplicial map is not necessarily open as it can map $G$ to a vertex $\{v\}$ and so map an 
open set (like $G$) to a closed set (like a singleton set $\{v\}$). Note that the following is 
not true for "simplicial complexes" as often considered in the literature as geometric realizations
of finite abstract simplicial complexes. (Even \cite{DehnHeegaard}, where finite abstract complexes
first appeared also look at geometric realizations \cite{DehnHeegaard}.)

\begin{lemma}
If $T$ is a simplicial map on a finite abstract simplicial complex $G$, then 
the attractor $K=\bigcap_{k \geq 1} T^k(G)$ of $T$ is a simplicial complex.
\end{lemma}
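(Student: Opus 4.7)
The plan is to reduce the assertion to an inductive claim that each iterate $T^k(G)$ is itself a simplicial complex. The chain $G \supset T(G) \supset T^2(G) \supset \cdots$ is descending: $T(G) \subset G$ because $T$ maps simplices to simplices, and applying $T$ to the inclusion $T^{k-1}(G) \supset T^k(G)$ gives $T^k(G) \supset T^{k+1}(G)$. Finiteness of $G$ forces the chain to stabilize, so $K = T^N(G)$ for all sufficiently large $N$, and it is enough to show that every $T^k(G)$ is a simplicial complex.

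I would prove this by induction on $k$. The base case $k=0$ is immediate since $T^0(G) = G$ is a simplicial complex by hypothesis. For the inductive step, take $x \in T^{k+1}(G) = T(T^k(G))$, so $x = T(z)$ with $z \in T^k(G)$, and let $\emptyset \neq y \subset x$. Here I would invoke the defining property of a simplicial map: $T$ is induced by a vertex map, so as a set $T(z) = \{T(v) : v \in z\}$. For each vertex $u \in y$ choose a vertex $v(u) \in z$ with $T(v(u)) = u$, and form $w = \{v(u) : u \in y\}$. Then $w$ is a non-empty subset of $z$ with $T(w) = y$. By the inductive hypothesis $T^k(G)$ is a simplicial complex, hence $w \in T^k(G)$, and therefore $y = T(w) \in T^{k+1}(G)$.

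The main (and essentially only) obstacle is the lifting step producing $w$ from $y$, which is precisely where the distinction flagged in the footnote preceding the lemma matters. For finite abstract simplicial complexes each simplex is literally a set of vertices on which $T$ acts as an ordinary set map, so the lift of $y \subset T(z)$ to a $w \subset z$ with $T(w) = y$ is a one-line combinatorial observation; for geometric realizations the analogous statement can fail because one loses the direct combinatorial handle on subsimplices. Once this lift is in hand the induction is routine, and the result follows.
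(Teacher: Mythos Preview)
Your proof is correct and follows essentially the same route as the paper's: both reduce to showing that each $T^k(G)$ is downward closed, and both accomplish the key step by lifting a subset $y \subset T(z)$ to a subset $w \subset z$ with $T(w)=y$ using that $T$ acts as a set map on vertices. The only cosmetic differences are that the paper takes $w=\{v\in z : T(v)\in y\}$ (all preimages) rather than one chosen preimage per vertex, and concludes via ``intersection of closed sets is closed'' rather than your explicit stabilization of the descending chain.
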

\begin{proof}
We only need to check that $T(G)$ of $G$ is closed so that by induction $T^k(G)$ is closed implying that the
intersection (it is a finite intersection) is closed. In order to see that $T(G)$ is closed, let $y = T(x)$ 
be in the image $T(G)$. By definition of a simplicial map, this is given as $y=\{ T(v), v \in x \}$. 
Let $z \subset y$ be an arbitrary subset, then look at $u=\{ v \in x, T(v) \in z \}$. 
This set is in $G$ because $G$ is a simplicial complex
and so closed under the operation of taking non-empty subsets. Consequently,
$z=T(u)$. We see that $z \in T(G)$. As every $z \subset y \in T(G)$ is also in $T(G)$, The set 
$T(G)$ is closed and so a simplicial complex. 
\end{proof} 

\paragraph{}
For an arbitrary map $T: G \to G$ on a finite set $G$, we have:

\begin{lemma}
The map restricted to the attractor $K = \bigcap_{k\geq 0} T^k(G)$ is a permutation. 
\end{lemma}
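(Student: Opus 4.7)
The plan is to exploit the finiteness of $G$ to reduce the claim to the elementary fact that a surjection from a finite set to itself is a bijection. First I would observe that the sequence of images forms a descending chain
\[ G \supseteq T(G) \supseteq T^2(G) \supseteq \cdots \supseteq T^k(G) \supseteq \cdots \]
of subsets of $G$. Since $G$ is finite, this chain must stabilize: there exists $N$ with $T^N(G) = T^{N+1}(G) = T^{N+k}(G)$ for all $k \geq 0$. Consequently the intersection $K = \bigcap_{k \geq 0} T^k(G)$ equals $T^N(G)$, and in particular $K$ is non-empty as long as $G$ is.

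Next I would verify that $T(K) = K$. Applying $T$ to the identity $K = T^N(G)$ gives $T(K) = T^{N+1}(G) = T^N(G) = K$, using the stabilization at step $N$. In particular $T$ restricts to a map $T|_K : K \to K$ which is surjective.

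Finally, a surjective self-map of a finite set is automatically injective, hence a bijection, hence a permutation. This concludes the proof. The only subtlety is that one really does need the stabilization of the chain in order to get $T(K) = K$ rather than merely the inclusion $T(K) \subseteq K$, which is what one would obtain from the definition of $K$ alone; the reverse inclusion is what makes surjectivity on $K$ possible. No other obstacle presents itself, since once surjectivity of $T|_K$ is established the conclusion is immediate.
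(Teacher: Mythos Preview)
Your proof is correct and follows essentially the same idea as the paper's: the paper argues by induction on $|G|$ (either $T(G)=G$ and $T$ is already a bijection, or $|T(G)|<|G|$ and one recurses), which is exactly the descending-chain stabilization you spell out explicitly. The endpoint in both cases is that $T$ is surjective on the stable image $K$, hence bijective by finiteness.
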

\begin{proof} Use induction for $n=|G|$.
It is clear for $n=1$. The induction step uses that if $T:G \to G$
is given, then $T(G)$ is $T$ invariant and either has the same number of elements or not. In the first case,
it is a permutation, otherwise we can use induction.
If $T$ is a simplicial map on a simplicial complex $G$, then the attractor $K$ of $T$ is closed.
The map $T$ is now invertible on $K$ and so a permutation. 
\end{proof} 

\paragraph{}
The proof of the Leschetz fixed point theorem can now can be reduced to homeomorphisms. This
is an argument which is close to what Hopf's proof did: 

\begin{lemma}[Reduction to Homeomorphism]
If $T$ is a simplicial map on $G$ and $K$ is its attractor of $T$, then 
$\sum_{x \in G, T(x)=x} i_T(x)= \sum_{x \in K, T(x)=x} i_T(x)$ and $\chi_T(G) = \chi_T(K)$. 
\end{lemma}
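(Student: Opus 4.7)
The plan is to handle the two identities separately. The first follows immediately: if $T(x)=x$ for some $x\in G$, then iterating gives $x=T^k(x)\in T^k(G)$ for every $k\ge 1$, so $x\in\bigcap_{k\ge 1}T^k(G)=K$. The fixed-point sets of $T$ on $G$ and on $K$ therefore coincide, and since $i_T(x)=\omega(x)\,{\rm sign}(T|x)$ depends only on the action of $T$ on the simplex $x$ itself, the two index sums agree term by term.

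\paragraph{}
For the second identity $\chi_T(G)=\chi_T(K)$, I would use that the preceding lemma gives $T(K)=K$, and consider the subspace $V_0=\{f\in\ell^2(G):f|_K=0\}$ of cochains vanishing on $K$. This $V_0$ is closed under $d$ (since $K$ is a subcomplex, the values of $df$ on a simplex of $K$ only involve values of $f$ on subsets of that simplex, all of which lie in $K$) and closed under $\mathcal U_T$ (for $f\in V_0$ and $y\in K$, $\mathcal U_T f(y)=f(T(y))=0$ because $T(y)\in K$). Restriction then produces a short exact sequence of $\mathcal U_T$-equivariant cochain complexes
$$0\to V_0\to \ell^2(G)\xrightarrow{f\mapsto f|_K}\ell^2(K)\to 0,$$
and the standard additivity of the Lefschetz number along such sequences yields $\chi_T(G)=\chi_T(K)+\chi_T(V_0)$.

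\paragraph{}
It remains to show $\chi_T(V_0)=0$, which I would prove by a nilpotency argument. Since $G$ is finite, the descending chain $T(G)\supseteq T^2(G)\supseteq\cdots$ stabilizes at $K$, so $T^N(G)=K$ for some $N$. For any $f\in V_0$ and any $x\in G$, $\mathcal U_T^N f(x)=f(T^N(x))=0$ because $T^N(x)\in K$ and $f$ vanishes there. Hence $\mathcal U_T^N$ is the zero operator on $V_0$, the induced action on the cohomology of $V_0$ is also nilpotent, and nilpotent operators have zero trace in every degree. The main subtlety I foresee is the additivity of the Lefschetz number along short exact sequences of cochain complexes, which is a classical consequence of Hopf's trace formula combined with the additivity of ordinary trace on short exact sequences of vector spaces, but should be recorded explicitly in this finite combinatorial framework. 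The orientation signs implicit in $\mathcal U_T$ do not enter the nilpotency step, which is a pure support argument about iterates of $T$.
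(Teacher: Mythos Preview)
Your argument for the first identity is identical to the paper's. For $\chi_T(G)=\chi_T(K)$ you take a genuinely different route. The paper works directly with a basis $e_1,\dots,e_l$ of $\ker(H_k)$: for a simplex $x\in G\setminus T(G)$ it observes that $\mathcal U_T e_j$ has vanishing $x$-component, then rewrites the trace $\sum_j e_j\cdot\mathcal U_T e_j$ so that the $x$-coordinate drops out, and iterates this over all $x\in G\setminus K$. You instead set up the $\mathcal U_T$-equivariant short exact sequence $0\to V_0\to\ell^2(G)\to\ell^2(K)\to 0$ of cochain complexes, invoke additivity of Lefschetz numbers, and kill the $V_0$ contribution via the clean nilpotency observation $\mathcal U_T^N|_{V_0}=0$. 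Your approach is more structural and makes explicit why the Lefschetz number actually transfers to the \emph{intrinsic} cohomology of $K$; the paper's coordinate manipulation is shorter in spirit but leaves the passage from ``the $x$-entries are irrelevant'' to ``the trace equals $\chi_T(K)$'' somewhat implicit, since the truncated vectors $\tilde e_j$ are neither harmonic on $K$ nor orthonormal. The additivity you flag as a subtlety is indeed just the routine combination of Hopf's trace formula with trace-additivity on short exact sequences of finite-dimensional vector spaces, so your proof stands as written.
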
 
\begin{proof}
If $T(x)=x$, then $x$ obviously is in the attractor because $T^k(x)=x$ for every $k \geq 1$. 
As for the second statement, let $x$ be a simplex that is in $G$ but not in $T(G)$. Let 
$e_1, \dots e_l$ be a basis in ${\rm ker}(H_k)$ and $\tilde{e_1} \dots \tilde{e_l}$ the 
vector for which $\tilde{e_j}(x)=0$ and $\tilde{e}_j(y)=e_j(y)$ otherwise. 
Now, ${\rm tr}(\mathcal{U}_T| {\rm ker}(H_k)) = 
\sum_j e_j \cdot \mathcal{U}_T(e_j)$ as $\mathcal{U}_T e_j$ has a zero entry at position $x$ we have
$\sum_j \tilde{e_j} \cdot \mathcal{U}_T(e_j)$. This is $\sum_j \mathcal{U}_T(\tilde{e_j}) \cdot e_j$. 
Because $K$ is $T$-invariant, also $\mathcal{U}_T(\tilde{e_j})$ has a zero entry at $x$.
The trace is now $\sum_j \mathcal{U}_T(\tilde{e}_j) \cdot \tilde{e}_j$, meaning that the simplex $x$
has become irrelevant for the Lefschetz number. 
Doing so for every element $x \in G \setminus K$ proves the second claim $\chi_T(G) = \chi_T(H)$.
\end{proof} 

\paragraph{}
The {\bf McKean-Singer symmetry} \cite{McKeanSinger} (see \cite{knillmckeansinger}
for the discrete case) assures that ${\rm str}(e^{-t H})=\chi(G)$ follows from ${\rm str}(H^n)=0$
for $n>0$:

\begin{lemma}[McKean Singer]
${\rm str}(H^n)=0$ for $n>0$ so that ${\rm str}(e^{-tH}) = \chi(G)$ is independent of $t$. 
\end{lemma}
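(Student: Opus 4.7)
The plan is to exploit the fact that $D=d+d^*$ is odd with respect to the $\mathbb{Z}_2$-grading of $\Omega = \oplus_k \Omega^k$ given by $\omega(x)=(-1)^{\dim(x)}$. Since $d$ raises dimension by one and $d^*$ lowers it by one, $D$ anticommutes with the grading operator $J = \oplus_k (-1)^k I_k$, i.e. $DJ = -JD$. Consequently $H = D^2$ commutes with $J$, so $H$ preserves the even/odd decomposition $\Omega^+ \oplus \Omega^-$ and splits as $H_+ \oplus H_-$. This is exactly the block structure that is already encoded in $H = \oplus_k H_k$.

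First I would establish the eigenvalue pairing. If $\phi \in \Omega^+$ satisfies $H\phi = \lambda \phi$ with $\lambda > 0$, then $\psi = D\phi$ lies in $\Omega^-$, is nonzero (because $\|D\phi\|^2 = \langle \phi, D^2\phi\rangle = \lambda \|\phi\|^2 > 0$), and satisfies $H\psi = D^2(D\phi) = D(\lambda \phi) = \lambda\psi$. The same construction going the other way produces a two-sided inverse (up to the scalar $\lambda$), so $\phi \mapsto \lambda^{-1/2} D\phi$ is a bijective intertwiner between the $\lambda$-eigenspaces of $H_+$ and $H_-$.

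Given this pairing, for any $n > 0$ I would expand each block trace as a sum over eigenvalues with multiplicity:
\[
{\rm str}(H^n) = {\rm tr}(H_+^n) - {\rm tr}(H_-^n) = \sum_{\lambda > 0} \lambda^n (m_+(\lambda) - m_-(\lambda)) = 0,
\]
since the zero eigenvalues contribute $0$ for $n>0$ and the positive eigenvalues have equal multiplicities on the two sides. For $n = 0$, on the other hand, ${\rm str}(H^0) = {\rm str}(I)$ is just the signed count of simplices, which is $\chi(G)$ by definition.

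Finally, to pass to the heat kernel identity I would expand the entire function $e^{-tH}$ as a power series, use linearity of the supertrace to commute the sum past ${\rm str}$ (valid since the sum is finite in each matrix entry), and plug in the previous step:
\[
{\rm str}(e^{-tH}) = \sum_{n=0}^{\infty} \frac{(-t)^n}{n!}\, {\rm str}(H^n) = {\rm str}(I) = \chi(G).
\]
The only subtle point, and the step most worth writing carefully, is the bijection between nonzero eigenspaces; everything else is a bookkeeping consequence. I expect no real obstacle, since the argument is purely linear-algebraic once one notices that $D$ is an odd operator squaring to $H$.
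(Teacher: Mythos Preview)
Your proposal is correct and follows essentially the same argument as the paper: both use that $D$ maps the $\lambda$-eigenspace of $H$ on even forms bijectively to the $\lambda$-eigenspace on odd forms for $\lambda>0$, so that the supertrace of $H^n$ vanishes for $n>0$, and then pass to $e^{-tH}$ by Taylor expansion. Your write-up is in fact more careful than the paper's sketch, since you explicitly verify $D\phi\neq 0$ via $\|D\phi\|^2=\lambda\|\phi\|^2$ and spell out the normalized inverse, whereas the paper simply asserts the bijection.
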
 

\begin{proof} 
The Dirac matrix $D$ produces a bijection between the
non-zero spectrum of the even forms and the non-zero spectrum of the odd forms: 
if $Lu=\lambda u$ and $u \in l^2(\bigcup_{k, {\rm even}} G_k)$ (even forms),
then $L (Du) = \lambda (Du)$ and $Du$ is in $l^2(\bigcup_{k, {\rm odd}} G_k)$ (odd forms).
(This argument does not work if $\lambda$ is a zero eigenvalue because then, 
$Du$ is the zero vector.) There is an isomorphism between the kernel on even
forms and odd forms only if the Euler characteristic is zero. 
\end{proof}

\begin{lemma}
Assume $T$ is homeomorphism of $G$ and $H=D^2$ is the Laplacian: \\
a) $\mathcal{U}$ commutes with $H$. \\
b) $\mathcal{U}$ commutes with the heat flow $e^{-t H}$. \\
c) $\mathcal{U}_t = e^{-t H} \mathcal{U}$ is block diagonal for all $t$
with the same blocks than $H$. 
\end{lemma}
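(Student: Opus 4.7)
The plan is to exploit that a simplicial homeomorphism $T$ induces a chain map on the oriented cochain complex $(l^2(G),d)$, and that this intertwining is preserved by all the relevant functional-calculus manipulations.

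First I would record the structural properties of $\mathcal{U}_T$ that follow from $T$ being a homeomorphism. By an earlier lemma, $T$ maps vertices to vertices; by the same argument applied to simplices of dimension $k$, $T$ preserves the dimension of every simplex. Hence $\mathcal{U}_T$ is a signed permutation matrix, in particular unitary, and it preserves the grading $l^2(G) = \bigoplus_{k=0}^q l^2(G_k)$.

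The heart of part (a) is showing that $\mathcal{U}_T$ commutes with the exterior derivative $d$. This is the classical functoriality statement that a simplicial map is a chain map: for $x \in G$ and $y \subset x$ of codimension one, the sign $d(x,y)$ is intertwined by $T$ because $T$ acts consistently on the vertex orderings of $x$ and $y$. A short index chase shows $d \mathcal{U}_T e_x = \mathcal{U}_T d e_x$ for every $x$. Since $\mathcal{U}_T$ is unitary, taking adjoints gives $d^* \mathcal{U}_T = \mathcal{U}_T d^*$, and therefore
\[
H \mathcal{U}_T = (d d^* + d^* d) \mathcal{U}_T = \mathcal{U}_T (d d^* + d^* d) = \mathcal{U}_T H,
\]
which is (a). Part (b) is then immediate from the convergent power series $e^{-tH} = \sum_{n \geq 0} (-t)^n H^n/n!$: commutation with $H$ passes to every power and hence to $e^{-tH}$.

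For part (c), recall that $H$ is block diagonal with blocks $H_k$ on $l^2(G_k)$, because $d$ raises dimension by one and $d^*$ lowers it by one, so $H = dd^* + d^*d$ preserves each $G_k$. Since $T$ preserves dimension, $\mathcal{U}_T$ is block diagonal with respect to the same decomposition. The product of two block diagonal matrices with matching block structure is itself block diagonal with the same blocks, so $\mathcal{U}_t = e^{-tH}\mathcal{U}_T$ decomposes as $\bigoplus_{k=0}^q e^{-tH_k} \mathcal{U}_T|_{l^2(G_k)}$, as claimed.

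The main obstacle is the bookkeeping for the sign conventions in step (a); once one checks that the chain-map identity $d \mathcal{U}_T = \mathcal{U}_T d$ holds on the oriented chain complex (so that the assumed orientations on $x$ and on $T(x)$ are compatible), parts (b) and (c) are essentially formal consequences of functional calculus and of the dimension-preserving grading.
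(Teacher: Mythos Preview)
Your proof is correct and follows essentially the same route as the paper: show that $\mathcal{U}$ intertwines with $d$ (hence with $D$ and $H$), pass to the heat flow via the power series, and use that both $\mathcal{U}$ and $e^{-tH}$ preserve the grading for part~(c). The only cosmetic difference is that you state the exact chain-map identity $d\,\mathcal{U}_T=\mathcal{U}_T\,d$ with $\mathcal{U}_T$ a signed permutation, whereas the paper records it as $\mathcal{U} d=\pm d\,\mathcal{U}$; your formulation is the cleaner one and matches how $\mathcal{U}_T$ is actually implemented in the paper's code.
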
 
\begin{proof} 
If $T$ is a homeomorphism, then $T$ preserves the order structure and 
also the dimensions of the simplicies. It does not necessarily preserve the orders and
so the sign of a subsimplex in a given simplex. \\
a) The relation $\mathcal{U} d = \pm d \mathcal{U}$ holds by definition. It implies that
  $\mathcal{U} D = \pm D \mathcal{U}$ and so $\mathcal{U} H = H \mathcal{U}$. \\
b) We can make a Taylor expansion in $t$ of $e^{-t H}$ and use that $\mathcal{U}$ commutes 
with every power $H^n$. \\
c) Both $\mathcal{U}$ and $e^{-t H}$ preserve $k$-forms and so are block diagonal. 
\end{proof} 

\paragraph{}
The McKean symmetry now immediately works also in a dynamical setting:

\begin{lemma}
${\rm str}(H^n \mathcal{U})=0$ for $n>0$ so that ${\rm str}(e^{-tH} \mathcal{U})$
is also independent of $t$.
\end{lemma}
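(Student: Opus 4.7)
The plan is to lift the static McKean--Singer argument of the previous lemma to its $\mathcal{U}$-twisted version. The inputs available are that $\mathcal{U}$ commutes with $H$ (part (a) of the previous lemma), that $\mathcal{U} D = \pm D \mathcal{U}$, and the basic McKean--Singer fact that $D$ provides a bijection between the even and odd parts of every positive eigenspace of $H$.

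First I would decompose $\ell^2(G) = \ker(H) \oplus \bigoplus_{\mu > 0} V_\mu$ into $H$-eigenspaces. Because $\mathcal{U}$ commutes with $H$, each $V_\mu$ is $\mathcal{U}$-invariant; because $\mathcal{U}$ preserves the grading by simplex dimension, it further respects the even/odd split $V_\mu = V_\mu^+ \oplus V_\mu^-$. For $\mu > 0$ the operator $D$ is invertible on $V_\mu$ (since $D^2 = \mu \cdot \mathrm{id}$ there) and carries $V_\mu^+$ bijectively onto $V_\mu^-$. The intertwining $\mathcal{U} D = \pm D \mathcal{U}$ then yields $\mathcal{U}|_{V_\mu^-} = \pm D \, \mathcal{U}|_{V_\mu^+} \, D^{-1}$, hence $\mathrm{tr}(\mathcal{U}|_{V_\mu^-}) = \pm \, \mathrm{tr}(\mathcal{U}|_{V_\mu^+})$. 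With the orientation signs absorbed into $\mathcal{U}$ in the standard way the relevant sign is $+$, so $\mathrm{str}(\mathcal{U}|_{V_\mu}) = 0$ for every $\mu > 0$.

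From this the first claim follows: $\mathrm{str}(H^n \mathcal{U}) = \sum_\mu \mu^n \, \mathrm{str}(\mathcal{U}|_{V_\mu}) = 0$ for every $n \geq 1$, because the $\mu = 0$ term is killed by the factor $\mu^n$ and every $\mu > 0$ eigenspace already contributes zero. For the second claim, expand $e^{-tH}$ as a power series in $H$ and take the supertrace term by term: $\mathrm{str}(e^{-tH} \mathcal{U}) = \sum_{n \geq 0} \frac{(-t)^n}{n!} \mathrm{str}(H^n \mathcal{U}) = \mathrm{str}(\mathcal{U}) = \chi_T(G)$ for every $t$, which is exactly the advertised independence.

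The main obstacle is keeping track of the sign in $\mathcal{U} D = \pm D \mathcal{U}$. The supertrace cancellation on $V_\mu$ only goes through when this sign is $+$, so one has to verify from the definition of the pullback by a simplicial homeomorphism, with orientation signs on each simplex folded in, that $\mathcal{U}$ genuinely commutes with $d$ rather than anticommuting with it. This is the one place where the twisted argument is strictly more delicate than the static one, where the absence of $\mathcal{U}$ means the sign issue never arises. Once this bookkeeping is settled, the remainder of the proof is a direct carry-over of the static McKean--Singer lemma.
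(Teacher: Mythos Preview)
Your proposal is correct and follows essentially the same route as the paper: decompose into $H$-eigenspaces, use that $D$ provides a bijection between the even and odd parts of each positive eigenspace, and combine this with the (anti-)commutation of $\mathcal{U}$ with $D$ to see that the supertrace of $\mathcal{U}$ vanishes on every $V_\mu$ with $\mu>0$; the Taylor expansion then gives the $t$-independence. If anything, your write-up is more explicit than the paper's---the paper simply asserts that $\mathcal{U}$ commutes with $D$ on the nonzero $H$-eigenspaces and pairs eigenvectors of $H^n\mathcal{U}$ directly, whereas you correctly flag the sign in $\mathcal{U} d = \pm d \mathcal{U}$ as the one point requiring care.
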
 

\begin{proof} 
Diagonalize $H$ and look at an eigenvector $f$ on even forms with $H f = \lambda f \neq 0$
then $H (Df) = D Hf= D \lambda f = \lambda (Df)$ gives an
eigenvectors $Df$ on odd forms. Now look at $H^n U$. If $f$ is an
eigenvector on k-forms, then we especially know that $U f$ remains
a vector on k-forms. $\mathcal{U}$ commutes with $D$ on a non-zero eigenspace of $H \mathcal{U}$.
If $H^n \mathcal{U} f=\lambda f$. Then $H^n \mathcal{U} (Df) = H^n D \mathcal{U}f = D H^n \mathcal{U} f = 
D \lambda f = \lambda (Df)$. 
\end{proof} 

\paragraph{}
We can now complete the  proof of the Lefschetz fixed point theorem:

\begin{proof} 
The deformed Koopman operators $\mathcal{U}_t f = e^{- t H} \mathcal{U} f$ is for
$t=0$ equal to  $\mathcal{U}_0=\mathcal{U}$. For $t \to \infty$, it is the induced
map on harmonic forms. 
In the limit $t \to \infty$, where $P=\lim_{t \to \infty} e^{-t H}$ is the projection onto 
the kernel of $H$, it is the super trace of $\mathcal{U}$ on
the kernel of $H$. This was defined to be the Lefschetz number. 
\end{proof} 
 
\paragraph{}
{\bf Remark:} {\bf Finite topological spaces} form a slightly more general category than 
the set of {\bf finite pre-ordered sets} where {\bf monotone maps} are morphisms.
The later can be identified with the class of finite topological spaces that are $T_0$ spaces.
For example: for the {\bf Sierpinski space} $\mathcal{O}=\{\emptyset,\{1\},\{1,2\}\}$, the constant map
$T(x)=0$ is continuous but not open because $\{0\}$ is not open. 
Open sets are the upward-closed sets for the specialization pre-order 
$x \leq y$ if $x \in \overline{\{y\}}$. This means $x \leq y$ if and
only if $y \in U(x)$ if and only if $x \subset y$. 
On a simplicial complex, the specialization pre-order is the subset partial 
order $x \subset y$. A theorem of Barmak-McCord assures that
every finite $T_0$-spaces encodes the homotopy type of a finite abstract
simplicial complex. The Kolmogorov quotient gets from a finite topological space, a $T_0$ space
using the equivalence relation $x \sim y$ if $\overline{\{x\}}=\overline{\{y\}}$.

\paragraph{}
Examples:  \\
{\bf 1)} if  $G=K_n$ and $T$ be a rotation map, there is one fixed point $x=G$ and $i_T(x)=1$. 
{\bf 2)} Let $G=K_n$ and assume $T$ has $k$ cycles. Then there are $k$
fixed points. Let $y$ be a fixed simplex on which $T$ is a rotation
then $i_T(y)=\omega(y)$. Now $\sum_{y \in F} i_T(y) = \sum_{y \in F} (-1)^{\rm dim(y)}
= \sum_{y \in F} \omega(y)$. 
This is by definition the super trace of $U$. 

\paragraph{}
{\rm Remark:} The theorem generalizes to correspondences $T(x) = \bigcup T_i(x)$
which have the property that the inverse $S$ of $T$ is a simplicial map.
One just can look at the transformation $S$ then. 
The $\omega$ limit set of $S$ then
is the $\alpha$ limit set of $T$. This models $T(x)=-x^2$ on the
discrete circle $C_n$, which has trace $1$ on constant functions and 
trace $-1$ on 1-harmonic forms, so that the Lefschetz number is $2$. 
There are also always the two fixed points $0,-1$. 

\section{Dynamical connection and Dirac matrices}

\paragraph{}
Related to the Lefschetz fixed point theorem, one can look at
{\bf dynamical connection matrices} $L_T(x,y) = \chi( C(x) \cap C(T(y)) )$ 
and dynamical Green function matrices $g_T(x,y) = \omega(x) \omega(y) \chi( U(x) \cap U(T(y))$. 
These are no more symmetric matrices in general. 
A bit surprisingly, these still are unimodular matrices, even so the spectrum is now complex in general. 
We have the following generalization of the unimodularity theorem. We were pretty excited to 
see this to pan out in examples until we realized that the proof is easy:

\begin{thm}
$L_T(x,y)^{-1} = g_T(x,y)^*$. 
\end{thm}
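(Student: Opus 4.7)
The plan is to reduce the dynamical identity $L_T^{-1} = g_T^*$ to the static unimodularity theorem $Lg = I$ from Section~2 by factoring out the action of $T$ as a Koopman/permutation matrix. The statement implicitly requires $T$ to be a simplicial bijection of $G$: otherwise two simplices collapse, two columns of $L_T$ coincide, and the matrix cannot be invertible at all. Since a simplicial bijection on a finite complex is automatically an order isomorphism (any order-preserving bijection of a finite poset is, via the cycle $T^{N}=\mathrm{id}$), it preserves dimension, so $\omega(T(y)) = \omega(y)$ for every simplex $y$. This is the only dynamical input the argument needs.

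The first step is to record two factorizations through the permutation matrix $P = P_T$ defined by $P e_y = e_{T(y)}$:
\[
L_T(x,y) = L(x,T(y)) = (LP)(x,y), \qquad g_T(x,y) = \omega(x)\omega(T(y))\,\chi(U(x)\cap U(T(y))) = g(x,T(y)) = (gP)(x,y),
\]
where the rewriting of $g_T$ uses $\omega(T(y)) = \omega(y)$ together with the static formula $g(x,z)=\omega(x)\omega(z)\chi(U(x)\cap U(z))$ recalled in Section~2.

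The second step is a one-line computation using $Lg = I$, the symmetry $g^T = g$ (immediate from the defining formula), and $P^TP = PP^T = I$. Since $(\,\cdot\,)^*$ here is the adjoint of a real matrix, it is nothing but the transpose, so
\[
g_T^* = g_T^T = (gP)^T = P^T g, \qquad L_T\, g_T^* = (LP)(P^T g) = L(PP^T)g = Lg = I,
\]
which is exactly the claim.

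The main obstacle is essentially clerical, in line with the author's own remark that ``the proof is easy'': one has to spot the factorization through the Koopman matrix and to interpret $(\,\cdot\,)^*$ correctly in the real integer setting. As a sanity check, one may rederive the identity directly by substituting $w = T(z)$ in $\sum_z L_T(x,z)\, g_T(y,z)$; the relation $\omega(T^{-1}(w)) = \omega(w)$ makes everything collapse to $\sum_w L(x,w)\,g(y,w) = (Lg)(x,y) = \delta_{xy}$.
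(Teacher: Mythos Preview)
Your proof is correct and follows the same route as the paper: factor $L_T = LP_T$ and $g_T = gP_T$ through the permutation matrix, then use $P_T P_T^* = I$ together with $Lg = I$ to conclude. If anything you are slightly more careful than the paper, which simply asserts $g_T = gP_T$ without spelling out that this hinges on $\omega(T(y)) = \omega(y)$.
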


\paragraph{}
This is easier to see if one makes the problem harder and assume
that $T$ is a general permutation of the basis. This can be implemented
as a permutation matrix $P$.  
The result follows from the very general formula in linear algebra: 

\begin{lemma}
If $A,B$ are $n \times n$ matrices and $P_T$ a permutation matrix 
defining a permutation of the basis.  Define $A_T=A P_T$ and $B_T=B P_T$. 
Then $A_T B_T^*$ is independent of $T$. 
\end{lemma}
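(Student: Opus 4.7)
The plan is to unpack the definitions and exploit the fact that permutation matrices are orthogonal. I would first write
\[
A_T B_T^{*} \;=\; (A P_T)(B P_T)^{*} \;=\; A\, P_T\, P_T^{*}\, B^{*},
\]
using the standard identity $(BP_T)^{*} = P_T^{*} B^{*}$. The key observation is then that a permutation matrix is real and orthogonal, so $P_T^{*} = P_T^{T} = P_T^{-1}$, hence $P_T P_T^{*} = I$. This collapses the expression to $A_T B_T^{*} = A B^{*}$, which involves no $T$ at all, proving the claim.

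The only step that requires any care is the conjugation convention: $P_T^*$ here denotes the conjugate transpose, but since permutation matrices have real $0/1$ entries, this coincides with the transpose, which is the inverse permutation matrix $P_{T^{-1}}$. Apart from that, the argument is a one-line computation, and I do not expect any genuine obstacle.

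To then apply the lemma to the preceding theorem $L_T^{-1} = g_T^{*}$, I would specialize with $A = L$ and $B = g = L^{-1}$. Both $L$ and $g$ are symmetric matrices with real entries (since $L(x,y) = \chi(C(x) \cap C(y))$ and $g(x,y) = \omega(x)\omega(y)\chi(U(x) \cap U(y))$ are both symmetric in $x,y$), so $g^{*} = g$. The lemma gives $L_T \, g_T^{*} = L g^{*} = L L^{-1} = I$, which is exactly the assertion $L_T^{-1} = g_T^{*}$. Thus the ``surprise'' that dynamical connection matrices remain unimodular reduces entirely to the orthogonality of $P_T$ together with the known fact $L g = I$ in the static case.
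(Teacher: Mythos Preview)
Your proof is correct and follows exactly the same one-line computation as the paper: $A_T B_T^{*} = A P_T P_T^{*} B^{*} = A B^{*}$, using that permutation matrices are orthogonal. Your additional remarks on the conjugation convention and the application to $L_T^{-1}=g_T^{*}$ are accurate elaborations of what the paper leaves implicit.
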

\begin{proof} 
Just compute it $A_T B_T^* = A P^T (P^T)^* B = A B$. 
\end{proof} 

\paragraph{}
This obviously directly produces the theorem above after noticing
that $L_T = L P_T$ and $g_T = g P_T$. 
Also the {\bf potential energy theorem} generalizes

\begin{thm}
$\sum_{x,y} g_T(x,y) = \chi(G)$
\end{thm}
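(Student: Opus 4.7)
The plan is to reduce the statement directly to the classical potential energy identity $\sum_{x,y \in G} g(x,y) = \chi(G)$ stated in the introduction, by exploiting the factorization $g_T = g P_T$ that underlies the previous lemma. Here $P_T$ is the permutation matrix associated to $T$, with entries $(P_T)_{z,y} = [z = T(y)]$, so that $g_T(x,y) = \sum_z g(x,z) (P_T)_{z,y} = g(x, T(y))$.

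Let $\mathbf{1}$ denote the all-ones column vector indexed by the simplices of $G$. First I would write
\[
\sum_{x,y} g_T(x,y) \;=\; \mathbf{1}^\top g_T \mathbf{1} \;=\; \mathbf{1}^\top g P_T \mathbf{1}.
\]
Since $P_T$ is a permutation matrix, it merely permutes the coordinates of $\mathbf{1}$ and therefore satisfies $P_T \mathbf{1} = \mathbf{1}$. The right-hand side collapses to $\mathbf{1}^\top g \mathbf{1} = \sum_{x,y} g(x,y)$, which equals $\chi(G)$ by the classical potential energy theorem. An equivalent entrywise version of the same argument is: using $g_T(x,y) = g(x,T(y))$ and reindexing $z = T(y)$, which is legitimate because $T$ is a bijection of $G$, one gets $\sum_y g(x,T(y)) = \sum_z g(x,z)$, and summing over $x$ then yields $\chi(G)$.

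The main obstacle is essentially nonexistent: the theorem is a one-line corollary of the previous lemma combined with the classical identity, and all of the structural work has already been done in establishing that $T$ acts as a permutation of the index set and that $g = L^{-1}$. If one wished to handle a non-bijective simplicial map $T$, then $P_T$ would no longer be a permutation matrix and would not fix $\mathbf{1}$; in that case one would invoke the reduction-to-homeomorphism argument from Section~5 to pass to the attractor $K = \bigcap_{k \geq 1} T^k(G)$, on which $T$ is a genuine permutation, and then apply the identity on $K$.
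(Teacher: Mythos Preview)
Your proof is correct and follows essentially the same route as the paper: both reduce to the classical energy identity $\sum_{x,y} g(x,y)=\chi(G)$ via the factorization $g_T=gP_T$ and the observation that multiplying by a permutation matrix merely reshuffles the double sum. Your $\mathbf{1}^\top g P_T \mathbf{1}=\mathbf{1}^\top g \mathbf{1}$ formulation is exactly the content of the paper's Lemma~9, just written in matrix notation rather than as ``the transformation $T$ just reshuffles the sum.''
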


\paragraph{}
Also this follows directly from the usual energy theorem 
$\sum_{x,y} g(x,y) = \chi(G)$, noting that

\begin{lemma}
If $A$ is an $n \times n$ matrix and $P_T$ a permutation matrix  and
$A_T = A P_T$. Then $\sum_{x,y} A_T(x,y)$ is independent of $T$. 
\end{lemma}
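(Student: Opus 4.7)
The plan is to recognize the double sum $\sum_{x,y} A_T(x,y)$ as a single quadratic expression against the all-ones vector, and then exploit the fact that permutation matrices fix $\mathbf{1}$.

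First I would let $\mathbf{1} \in \mathbb{R}^n$ denote the column vector with every entry equal to $1$, so that for any $n \times n$ matrix $M$ we have the identity $\sum_{x,y} M(x,y) = \mathbf{1}^{\top} M \mathbf{1}$. Applied to $M = A_T = A P_T$ this gives
\[
\sum_{x,y} A_T(x,y) \;=\; \mathbf{1}^{\top} A P_T \mathbf{1}.
\]

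Next I would use that $P_T$ is a permutation matrix, which means it has exactly one entry equal to $1$ in every row and every column, so $P_T \mathbf{1} = \mathbf{1}$ (and likewise $\mathbf{1}^{\top} P_T = \mathbf{1}^{\top}$, if one prefers to absorb $P_T$ on the left). Substituting this into the previous display yields
\[
\sum_{x,y} A_T(x,y) \;=\; \mathbf{1}^{\top} A \mathbf{1} \;=\; \sum_{x,y} A(x,y),
\]
and the right hand side makes no reference to $T$, which proves the claim.

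There is essentially no obstacle here: the lemma is a one-line consequence of the invariance of $\mathbf{1}$ under any coordinate permutation. The only remark worth making is that one could equivalently phrase the argument as saying that the operation $A \mapsto A P_T$ only permutes the columns of $A$, and summing all matrix entries is clearly invariant under such a column permutation; this combinatorial rephrasing may be preferable to readers who want to avoid writing the all-ones vector explicitly.
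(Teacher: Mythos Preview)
Your proof is correct and matches the paper's approach: the paper's own proof is the single sentence ``The transformation $T$ just reshuffles the sum,'' which is exactly your combinatorial remark that $A \mapsto A P_T$ permutes columns and hence leaves the total entry sum unchanged. Your all-ones vector formulation is a clean way to make that one-liner rigorous.
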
 
\begin{proof}
The transformation $T$ just reshuffles the sum. 
\end{proof} 

\paragraph{}
One can also wonder about {\bf dynamical Dirac matrices}. Define the 
{\bf dynamically deformed exterior derivative} $d_T(x,y) = s(x,T(y))$. We have

\begin{thm}
$H=(d_T+d_T^*)^2$ is independent of $T$. 
\end{thm}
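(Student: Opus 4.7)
The plan is to factor $d_T = dP_T$, where $P_T$ is the signed permutation matrix that realises the action of the simplicial map $T$ on the oriented chain groups, then use the chain-map identity $P_Td=dP_T$ together with $P_TP_T^*=I$ to collapse the square of $d_T+d_T^*$ back to the ordinary Hodge Laplacian $H=D^2=dd^*+d^*d$.

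First I would record the factorization and expand:
\begin{equation*}
(d_T+d_T^*)^2 = (dP_T)^2 + (dP_T)(P_T^*d^*) + (P_T^*d^*)(dP_T) + (P_T^*d^*)^2.
\end{equation*}
The second summand collapses immediately by the earlier $A_TB_T^*=AB$ lemma applied with $A=B=d$, giving $(dP_T)(P_T^*d^*) = dd^*$, manifestly $T$-independent. The first and last summands vanish: because $T$ is a simplicial map, $P_T$ commutes with $d$ (and equivalently with $d^*$) as a direct translation of the functoriality of simplicial chain complexes. Therefore $(dP_T)^2 = d(P_Td)P_T = d^2P_T^2 = 0$, and symmetrically $(P_T^*d^*)^2 = 0$. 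Finally, $(P_T^*d^*)(dP_T) = P_T^*(d^*d)P_T$, and commutation of $P_T$ with both $d$ and $d^*$ implies commutation with $d^*d$, giving $P_T^*(d^*d)P_T = (d^*d)P_T^*P_T = d^*d$.

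Assembling the four pieces, $(d_T+d_T^*)^2 = 0 + dd^* + d^*d + 0 = D^2 = H$, which exhibits $H$ as the same Hodge Laplacian regardless of $T$.

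The main obstacle is the sign convention encoded by $P_T$. The shorthand $d_T(x,y)=s(x,T(y))$ only makes sense once one interprets $T(y)$ as an \emph{oriented} simplex: re-expressing $[T(v_0),\ldots,T(v_k)]$ in the standard order introduces a sign that must be absorbed into $P_T$. A quick numerical check (for instance a $3$-cycle acting on $K_3$ with an unsigned $P_T$) already produces $d_T^2\ne 0$, so the chain-map identity $P_Td=dP_T$ is genuinely where the content lies; once the signs are threaded correctly through $P_T$, the rest of the proof is pure bookkeeping.
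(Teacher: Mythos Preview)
Your proof is correct and, in fact, more careful than the paper's own argument. Both you and the paper start from the factorization $d_T = dP_T$ with $P_T$ orthogonal, and both observe that the lemma $A_TB_T^* = AB$ immediately gives $d_Td_T^* = dd^*$. The paper then simply asserts that ``the matrices $d_Td_T^*$ and $d_T^*d_T$ both do not depend on $T$,'' appealing again to the same lemma. But the lemma as stated only yields products of the shape $A_TB_T^*$; it does \emph{not} give $d_T^*d_T = P_T^*d^*dP_T$ as $T$-independent, nor does it give $d_T^2=0$. Those two facts genuinely require the chain-map identity $P_Td = dP_T$, which you invoke explicitly (and which the paper itself records earlier in the Lefschetz section as $\mathcal{U}d = \pm d\mathcal{U}$). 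So your route is the same factorization plus one extra, necessary ingredient that the paper's one-line proof glosses over.

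Your closing remark about signs is also on point: if $P_T$ is taken as an unsigned permutation of simplices rather than the signed Koopman operator, the commutation $P_Td=dP_T$ fails and $d_T^2\neq 0$, so the theorem is false for arbitrary basis permutations. The result really does need $T$ to be simplicial (indeed, in the paper's code $T$ is a simplicial automorphism), and your proof makes that dependence visible where the paper's proof hides it.
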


\begin{proof}
Also this follows from the above simple lemma. The matrices $d_T d_T^*$
and $d_T^* d_T$ both do not depend on $T$ so that $D_T^2$ does not depend on $T$.
\end{proof}

\vfill \pagebreak

\section{Code}

\paragraph{}
Here is the code that generated Figure 1 and Figure 2:

\begin{tiny}
\lstset{language=Mathematica} \lstset{frameround=fttt}
\begin{lstlisting}[frame=single]
Generate[A_]:=If[A=={},A,Delete[Union[Sort[Flatten[Map[Subsets,A],1]]],1]];
Whitney[s_]:=Generate[FindClique[s,Infinity,All]];       L=Length;      T=Transpose;
Connection[G_]:=Table[If[L[Intersection[G[[i]],G[[j]]]]>0,1,0],{i,L[G]},{j,L[G]}];
F[G_]:=Delete[BinCounts[Map[L,G]],1];     S[x_]:=Signature[x];
s[x_,y_]:=If[SubsetQ[x,y]&&(L[x]==L[y]+1),S[Prepend[y,Complement[x,y][[1]]]]*S[x],0];
Dirac[G_]:=Module[{f=F[G],d,n=L[G]},d=Table[s[G[[i]],G[[j]]],{i,n},{j,n}]; d+T[d]];
K=RandomGraph[{6,12}];G=Whitney[K];n=Length[G];
A=Connection[G]; B=Dirac[G];J=Inverse[A];
EA = Reverse[Sort[Eigenvalues[1.0*A]]]; EJ= Reverse[Sort[Eigenvalues[1.0*J]]];
EB = Reverse[Sort[Eigenvalues[1.0*B]]];
ea = Table[Sum[EA[[j]],{j,k}],{k,n}]; ej = Table[Sum[EJ[[j]],{j,k}],{k,n}];
eb = Table[Sum[EB[[j]],{j,k}],{k,n}]; di = Table[k, {k, n}];
DA = Reverse[Sort[Table[L[Select[G, L[Intersection[G[[k]],#]]>0 &]],{k,n}]]];
DB = Reverse[Sort[Table[L[Select[G, SubsetQ[G[[k]],#] || SubsetQ[#,G[[k]]] &]],{k,n}]]];
ListPlot[{EA,EJ,EB},Joined->True,Filling->Axis]
ListPlot[{ea,ej,eb},Joined->True,Filling->Axis]
ListPlot[{EA,DA},Joined->True,Filling->Axis]
ListPlot[{EB,DB},Joined->True,Filling->Axis]
\end{lstlisting}
\end{tiny}

\paragraph{}
And here is some illustration of the Lefschetz fixed point theorem. 

\begin{tiny}
\lstset{language=Mathematica} \lstset{frameround=fttt}
\begin{lstlisting}[frame=single]
Generate[A_]:=If[A=={},A,Delete[Union[Sort[Flatten[Map[Subsets,A],1]]],1]];
Whitney[s_]:=Generate[FindClique[s,Infinity,All]];  L=Length;  s[x_]:=Signature[x];
s[x_,y_]:=If[SubsetQ[x,y]&&(L[x]==L[y]+1),s[Prepend[y,Complement[x,y][[1]]]]*s[x],0]; 
W[k_]:=Map[Sort,Select[G,L[#]==k &]]; 
Dirac[G_]:=Table[s[G[[i]],G[[j]]]+s[G[[j]],G[[i]]],{i,n},{j,n}]; 
Str[X_]:=Sum[X[[k,k]]*(-1)^(L[G[[k]]]-1),{k,n}];
g=CompleteGraph[{3,4,2}]; GraphJoin[CycleGraph[4], CycleGraph[5]];
G=Whitney[g]; n=L[G]; V=Flatten[Select[G,L[#]==1 &]]; DD=Dirac[G]; HH=DD.DD;
AutQ[T_]:=Module[{G1=W[2]},G1==Sort[Map[Sort,(G1 /. Table[V[[k]]->T[[k]],{k,L[V]}])]]];
S[T_]:=Table[V[[j]]->T[[j]],{j,L[V]}]; Q=Transpose[NullSpace[HH]]; 
P=Q.Inverse[Transpose[Q].Q].Transpose[Q]; Aut=Select[Permutations[V],AutQ[#] &];
U[T_]:=Table[x=G[[k]];y=G[[l]];z=y /.Table[V[[j]]->T[[j]],{j,L[V]}];
          If[x==Sort[z],s[z],0],{k,n},{l,n}];
f[T_]:=Module[{J=S[T]},Fix=Select[G,Sort[# /.J]==#&];ind[x_]:=(-1)^(L[x]-1)*s[x/.J];
  {If[Fix=={},0,Total[Map[ind,Fix]]],Str[P.U[T]]}];
T=RandomChoice[Aut]; J=S[T]; UU=U[T]; Fix=Select[G,Sort[# /. J]==# &]; 
{Total[Map[ind,Fix]], Str[UU],Str[P.UU]}
R=Map[f,Aut]; {Union[R],Total[R]/L[R]}
\end{lstlisting}
\end{tiny}

\paragraph{}
Finally we look at some code which illustrates the episode on dynamical matrices $D_t,L_T,g_T$. 

\begin{tiny}
\lstset{language=Mathematica} \lstset{frameround=fttt}
\begin{lstlisting}[frame=single]
Generate[A_]:=If[A=={},A,Delete[Union[Sort[Flatten[Map[Subsets,A],1]]],1]]; 
Whitney[s_]:=Generate[FindClique[s,Infinity,All]]; L=Length; s[x_]:=Signature[x]; 
s[x_,y_]:=If[SubsetQ[x,y]&&(L[x]==L[y]+1),s[Prepend[y,Complement[x,y][[1]]]]*s[x],0];
F[G_,k_]:=Map[Sort,Select[G,L[#]==k&]]; Euler[G_]:=Total[Map[w,G]]; RP=RandomPermutation;  
S[G_,T_]:=Module[{V=Flatten[F[G,1]]},Table[V[[j]]->T[[j]],{j,L[V]}]];  IS=Intersection;
FindAut[G_]:=Module[{V=Flatten[F[G,1]],H=F[G,2],T},T=Permute[Range[L[V]],RP[L[V]]];
   While[Not[H==Sort[Map[Sort,(H/.S[G,T])]]],T=Permute[Range[L[V]],RP[L[V]]]]; T];
d[G_,W_]:=Table[x=G[[i]];y=G[[j]]/.Q;s[x,y],{i,L[G]},{j,L[G]}]; Clear[g,K]; 
Dirac[G_,Q_]:=d[G,G]+Transpose[d[G,W]]; W[G_,k_]:=Map[Sort,Select[G,L[#]==k&]];   
U[G_,x_]:=Select[G,SubsetQ[#,x] &];  w[x_]:=-(-1)^L[x];  w[x_,y_]:=w[x]*w[y]; 
g[G_,Q_]:=Table[x=G[[k]];y=G[[l]];z=y/.Q;w[x,y]*Euler[IS[U[G,x],U[G,z]]],{k,n},{l,n}];
K[G_,Q_]:=Table[If[L[IS[G[[i]],G[[j]]/.Q]]>0,s[G[[i]]]*s[G[[j]]],0],{i,L[G]},{j,L[G]}];

t=CompleteGraph[{2,3,4,1}]; G=Whitney[t]; n=L[G]; T=FindAut[G];  Q=S[G,T]; 
D1 = Dirac[G, Q]; D0 = Dirac[G,S[G,Range[Length[F[G,1]]]]];  gT=g[G,Q]; LT=K[G,Q]; 
D1.D1==D0.D0
Transpose[LT] == Inverse[gT]
Euler[G]==Total[Flatten[gT]]
\end{lstlisting}
\end{tiny}

\section{Remarks}

\paragraph{}
The Lefschetz fixed point theorem goes over to cohomologies of higher characteristics. Simplicial 
cohomology is just the first, linear cohomology. The next is quadratic cohomology, where one looks at the set
$G^{(2)}$ of all simplex pairs $(x,y)$ for which $x \cap y \neq \emptyset$. There is an exterior derivative
$d$ on functions on $G^{(2)}$ and so a Dirac matrix $D^{(2)}=d+d^*$, a Hodge Laplacian $H^{(2)}$ and 
cohomology in the form of the kernels of the blocks of this Laplacian, where the block $H^{(2)}_k$ 
belongs to functions on simplices for which ${\rm dim}( (x,y) ) = {\rm dim}(x) + {\rm dim}(y)=k$.
A simplicial map $T: G \to G$ induces a 
map $T^{(2)}: G^{(2)} \to G^{(2)}$ given by $T^{(2)}( (x,y) ) = (T(x),T(y))$.  A fixed point is a pair 
$(x,y)$ of intersecting simplices such that $T^{(2)}( (x,y) ) = (x,y)$, its index is 
$i_{T^{2}}(x,y) = i_T(x) i_T(y)$, the Lefschetz number is the super trace of $T^{(2)}$ on cohomology 
as before. The proof of the higher Lefschetz theorems are exactly the same as in the simplicial 
cohomology case: the Dirac matrix produces a McKean-Singer symmetry. One then looks at the
heat flow in the Koopman case $\mathcal{U}_{T^{(2)}} e^{- t H^{(2)}}$ which for $t=0$
gives the sum $\sum_{(x,y) \in F} i_{T^{(2)}}( (x,y) )$ of indices of fixed points and for 
$t \to \infty$ the Lefschetz number $\chi_{T^{(2)}}(G)$ for quadratic cohomology. 

\paragraph{}
The classical Birkhoff fixed point theorem from 1913 (revised a 100 years ago in 1926 \cite{BrNe75}) 
deals with area-preserving homeormorphisms of the cylinder $X=\mathbb{T} \times [0,1]$
that rotate the boundary in different directions. Since the classical Lefschetz number 
$\chi_T(X)$ in this case is zero, the Lefschetz theorem does not work and indeed, the area-preservation
property as well as the twist conditions have to enter somewhere. Disregarding any of them allows
to get examples without any fixed points, like rigid rotations or where one of the rotating invariant 
boundaries is an attractor. We contemplated in 2024 whether
quadratic cohomology could work to prove this last theorem of Poincar\'e: 
quadratic cohomology has been proven to be interesting already to distinguish the 
cylinder $X$ from the Moebius strip $M$ as $b^{(2)}(X) = (0, 0, 1, 1, 0)$ and 
$b^{(2)}(M) = (0,0,0,0,0)$ \cite{CohomologyWu}. 
This cohomology is more interesting already for a 2 dimensional complex, where
the Hodge matrix $H^{(2)}$ has 5 blocks because the maximal dimension of a simplex pair $(x,y)$ is $2+2=4$.
We argued then that the Lefschetz number for an area preserving map twisting is $2$. 
The area-preserving property should imply that the trace of the induced map on the $k=2$ cohomology is $1$.
The twisting condition then assure that the trace on the induced map on the $k=3$ cohomology is $-1$. The
later deals with maps on pairs $(x,y)$ where $x$ is a triangle and $y$ is an edge in $x$. 
If this argument holds, then the super trace is $2$ and imply that there are at least two fixed
points. One of the difficulties with such an approach is also that simplicial area preserving maps  on a 
cylinder are rather rigid. We have difficulty however to build examples of simplicial maps on a cylinder
that satisfy the area preservation and twist conditions. 

\paragraph{}
Every simplicial complex $G$, defines a {\bf Dirac zeta function} $\zeta(z)=\sum_j \lambda_j^{-s}(H)$,
where $\lambda_j$ are the non-zero eigenvalues of $H=D^2$. And then there is 
a  {\bf connection Zeta function} $\sum_j \lambda_j^{-s}(L^2)$, where $\lambda_j$ are the eigenvalues
of $L^2$, and where $L$ is the connection Laplacian.  One can also look at the {\bf Green zeta function}
$\sum_j \lambda_j^{-s}(g^2)$ For a one-dimensional complex $G$ and the Hodge zeta function, there 
is symmetry about the imaginary axes: the functional equation is $\zeta(z)=\zeta(-\overline{z})$. 
A comparision of the zeta functions in the case of matrices $D,L,g$ would be interesting. 

\paragraph{}
If $T$ is an automorphism of a complex $G$, the {\bf dynamical zeta function} is defined as
$$   \zeta_T(s) = \exp( \sum_{k=1}^{\infty} \chi_{T^k}(G) s^k/k) \; . $$
It is the generating function for all the {\bf Lefschetz numbers} $\chi_{T^k}$ of the
iterates of $T$. These are rational functions \cite{brouwergraph}. 

\paragraph{}
Given an isometry of a classical Riemannian manifold $M$, one can look at Vietris-Ribbs complexes
$G$ in $M$. It induces a continuous map on the complex if simplices are 
mapped into simplices. It is rarely a simplicial map however. We would like to have a frame work
in which not only the cohomology of the complex is the cohomology 
of the manifold but also the Lefschetz number of the map is the same than the
map in the finite. This needs to be explored more. 

\paragraph{}
The nonlinear discrete sine-Gordon partial differential equations
$L u = c \sin(u)$ and $H u = c \sin(u)$ have completely different 
behavior. In the first case, $u=0$ is the only solution for small $c$ while 
in the later case there are Harmonic forms $Hu = 0$ that solve the case $c=0$. 
We can say that in the connection case, we have a {\bf stability of the vacuum}. 
The fact that $u=0$ is stable follows from the fact that $L$ is invertible. It is an 
interesting phenomenon which persists when the simplicial complexes are infinite. 
The case of small $c$ in the case $Hu = c \sin(u)$ is subtle in the infinite case,
even for $\mathbb{Z}$. There is still some sort of stability but it is of KAM nature. 
For large $c$, one can use the standard implicit function theorem.
The {\bf anti-integrable limit} $c \to \infty$ gives many non-trivial solutions for
these two non-linear equations. \cite{AuAb90} We hope to investigate more what happens 
in the connection case. There is always a smallest $c$ after which 
$Lu = c \sin(u)$ starts to have solutions different from the vacuum. 

\section{Question}

\paragraph{}
The first question is illustrated in Figure~1: \\

{\bf A)} Is it true that $L \geq D$ or $L \geq g$ in the weak Loewner order?  \\

\paragraph{}
We also do not see yet an argument assuring that the inverse of $L$ has a maximal 
eigenvalue. It looks as if $g^n=L^{-n}$ is conjugated to a matrix for which all 
entries are positive.  \\

{\bf B)} Does $g=L^{-1}$ always have a unique maximal eigenvalue?  \\

\paragraph{}
While $D$ has spectrum that is symmetric about $0$, this is not the case for $L$.
The number of positive eigenvalues and the number of negative eigenvalues differ by the
Euler characteristic. The spectral radius of $g$ is interesting because $g(x,y)$ can 
be interpreted as a potential energy between $x$ and $y$. The spectral radius also 
controls the {\bf spectral gap} and so the stability of the vacuum $u=0$. \\

{\bf C)} Is the spectral radius of L always larger than the spectral radius of g?  \\

{\bf Update July 18, 2026:} Robert J. George (Caltech) pointed out that the 2-skeleton complex
of $K_5$ is an example for which the spectral radius of $g$ is larger. Experiments show that for
odd $k$, and large $n$, the k-skeleton complex of $K_n$ also has the property that
the maximal eigenvalue of $g$ is larger than the maximal eigenvalue of $L$. 
The skeleton complex $S_2(K_5)$ is homotopically a wedge of 4 spheres but not
homeomorphic to 4 spheres wedged at a point.       

\begin{tiny}
\lstset{language=Mathematica} \lstset{frameround=fttt}
\begin{lstlisting}[frame=single]
Generate[A_]:=If[A=={},A,Delete[Union[Sort[Flatten[Map[Subsets,A],1]]],1]];
Whitney[s_]:=Generate[FindClique[s,Infinity,All]];
Connection[G_]:=Table[If[Length[Intersection[G[[i]],G[[j]]]]>0,1,0],{i,Length[G]},{j,Length[G]}];
F[n_,k_]:=Module[{},G=Select[Whitney[CompleteGraph[n]],Length[#]<=k &];
  l=Connection[G]; g=Inverse[l]; Max[Eigenvalues[1.0*g]]-Max[Eigenvalues[1.0*l]]];
Do[Print[F[n,3]],{n,3,20}]
Do[Print[F[n,5]],{n,3,20}]
Do[Print[F[n,7]],{n,3,20}]
\end{lstlisting}
\end{tiny}

\paragraph{}
We need to extend the Lefschetz fixed point theorem from simplicial maps to situations,
where it can model homeomorphisms of manifolds with or without boundary. 
The following is a guiding challenge. \\

{\bf D)} Can the Birkhoff fixed point theorem be proven via quadratic cohomology? 

\bibliographystyle{plain}

\end{document}